\newtheorem{theorem}{Theorem}[section]
\newtheorem{corollary}[theorem]{Corollary}
\newtheorem{proposition}[theorem]{Proposition}
\newtheorem{lemma}[theorem]{Lemma}
\theoremstyle{definition}
\newtheorem{example}[theorem]{Example}
\newtheorem{remark}[theorem]{Remark}
\def\Aut{{\rm Aut}}
\def\Out{{\rm Out}}
\def\soc{{\rm soc}}
\def\AGL{{\rm AGL}}
\def\SL{{\rm SL}}
\def\GL{{\rm GL}}
\def\PSL{{\rm PSL}}
\def\PSigmaL{{\rm P\Sigma L}}
\def\PGammaL{{\rm P\Gamma L}}
\def\PGL{{\rm PGL}}
\def\Sym{{\rm Sym}}
\def\Alt{{\rm Alt}}
\def\ker{{\rm ker}\,}
\newcommand{\C}{\mathrm{C}}
\newcommand{\D}{\mathrm{D}}
\newcommand{\N}{\mathrm{N}}
\newcommand{\M}{\mathrm{M}}
\newcommand{\NN}{\mathbb N}
\newcommand{\ZZ}{\mathbb Z}
\begin{document}
\title{Skew product groups for monolithic groups}
\date{\today}
\author{Martin Bachrat{\' y}}
\address{Mathematics Department, University of Auckland, PB 92019, Auckland, New Zealand}
\email{mbac631@aucklanduni.ac.nz}
\author{Marston Conder}
\address{Mathematics Department, University of Auckland, PB 92019, Auckland, New Zealand}
\email{m.conder@auckland.ac.nz}
\author{Gabriel Verret}
\address{Mathematics Department, University of Auckland, PB 92019, Auckland, New Zealand}
\email{g.verret@auckland.ac.nz}

\begin{abstract}
Skew morphisms, which generalise automorphisms for groups, provide a fundamental tool for the study of regular Cayley maps and, more generally, for finite groups with a complementary  factorisation $G=BY$, where $Y$ is cyclic and core-free in $G$.  In this paper, we classify all examples in which $B$ is monolithic (meaning that it has a unique  minimal normal subgroup, and that subgroup is not abelian) and core-free in $G$. As a consequence, we obtain a classification of all proper skew morphisms of finite non-abelian simple groups.
\end{abstract}

\maketitle

\section{Introduction}

Let $G$ be a finite group expressible as a product $G = BC$ of subgroups $B$ and $C$ such that $B \cap C = \{1\}$, sometimes called a \emph{complementary factorisation} of $G$. If $C$ is cyclic, generated by $y$, say, then for every $b \in B$ there exists a unique $b' \in B$ and a unique $j \in \{1,2,\ldots,|C|\!-\!1\}$ such that $yb = b'y^j$.  This induces a bijection $\varphi\!: B \to B$ and a function $\pi\!: B \to \NN$, defined by $\varphi(b) = b'$ and $\pi(b) = j$, having the properties that $\varphi(1_B) = 1_B$ and $\varphi(ab) = \varphi(a)\varphi^{\pi(a)}(b)$ for all $a,b \in B$. Any bijection $\varphi\!: B \to B$ with these two properties is called a \emph{skew morphism} of $B$, and $\pi$ is its associated \emph{power function}.  This is clearly a generalisation of the concept of a group automorphism, which occurs in the special case where $\pi(b) =1$ for all $b$.

Conversely, let $\varphi$ be a skew morphism of a group $B$. Then $\varphi\in\Sym(B)$ and, identifying $B$ as a subgroup of $\Sym(B)$ through its left regular representation, it can be easily verified that  $G=B\langle\varphi\rangle$ is a group, called the \emph{skew product group} associated with $B$ and $\varphi$ (see \cite{KovacsNedela1}).  Moreover, one can show that $G=B\langle\varphi\rangle$ is a complementary factorisation and that $\langle \varphi \rangle$ is core-free in $G$ (see ~\cite[Lemma 4.1]{ConderJajcayTucker}). To find all skew morphisms of a group $B$, it thus suffices to find all complementary factorisations $G = B\langle y\rangle$  with $\langle y\rangle$ core-free in $G$.

Skew morphisms were introduced and found to be important in the study of \emph{regular Cayley maps}, which are embeddings of Cayley graphs on surfaces with an arc-transitive automorphism group that contains a vertex-regular subgroup; see \cite{JajcaySiran}.  In particular, they arose as an essentially algebraic concept with applications in topological graph theory.
The theory of skew morphisms has been developed over the last 16 years, following the publication of \cite{JajcaySiran}.
For example, it is now known that the order of a skew morphism $\varphi$ of a non-trivial group $B$ is always less than $|B|$; see \cite[Theorem 4.2]{ConderJajcayTucker}.

It is easy to find all skew morphism of groups of very small order (see \cite{ConderList} for example). On the other hand, it has proved challenging to determine the skew morphisms of infinite classes of finite groups.  In particular, this problem remains open for finite cyclic groups, despite positive progress recently (as in \cite{KovacsNedela1,KovacsNedela2} for example) and the determination of the corresponding regular Cayley maps for cyclic groups (see \cite{ConderTucker}).  Indeed the skew morphisms of cyclic groups of order $64$ are still not known. We understand, however, that a full classification of skew morphisms for dihedral groups (by Kov{\' a}cs and Kwon) is imminent, together with a full classification of regular Cayley maps for dihedral groups (by Kan and Kwon), building on earlier work (in \cite{KovacsKwon,ZhangDu} for example).

We refer to skew morphisms that are not automorphisms as \emph{proper} skew morphisms. In the cases that have been worked out, there are usually many proper skew morphisms. 
In contrast, we show that non-abelian simple groups rarely have any proper skew morphisms. In fact we take this even further, to monolithic groups. A group $B$ is \emph{monolithic} if it has a unique minimal normal subgroup $A$, and this subgroup $A$ is non-abelian. In that case, $A$ is called the \emph{monolith} of $B$.  Also a subgroup $H$ of a group $G$ is said to be \emph{core-free} in $G$ if it contains no non-trivial normal subgroup of $G$.

\smallskip
Our main theorems are the following:

\begin{theorem}
\label{thm:main}
Let $G$ be a finite group with a complementary factorisation $G=BY$, where $B$ is monolithic,  and $Y$ is cyclic and core-free in $G$. If $B$ is core-free in $G$, then one of the following occurs$\,:$
\begin{enumerate}
\item $G \cong \Sym(n)$, $B \cong \Sym(n-1)$ and $Y\cong \C_n$ for some $n\geq 6$,
\item $G \cong \Alt(n)$, $B \cong \Alt(n-1)$ and $Y\cong \C_n$ for some odd $n\geq 7$,
\item $G \cong \PSL(2,11)$, $B\cong \Alt(5)$ and $Y\cong \C_{11}$,
\item $G \cong \M_{11}$,  $B \cong \M_{10}$ and $Y\cong \C_{11}$,
\item $G \cong \M_{23}$, $B \cong \M_{22}$ and $Y\cong \C_{23}$.
\end{enumerate}
On the other hand, if $B$ is not core-free in $G$, then the monolith $A$ of $B$ is normal in $G$, and either the centraliser $Z = \C_Y(A)$ is trivial, or its order $|Z|$ is less than $|B/A|$.
\end{theorem}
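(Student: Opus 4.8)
The plan is to prove first that $A\trianglelefteq G$, and then to bound $|Z|$ by passing to the centraliser $C=\C_G(A)$. For the normality of $A$, let $N=\Core_G(B)$, which is non-trivial by hypothesis. Since $N$ is a non-trivial normal subgroup of $B$ and $A$ is the monolith of $B$, we have $A\le N$; and as $A\trianglelefteq B$ while $N\le B$, also $A\trianglelefteq N$. Because $A$ is a non-abelian minimal normal subgroup of $B$, the subgroup $\C_B(A)$ is normal in $B$ and meets $A$ trivially, hence is trivial (otherwise it would contain the monolith, forcing $A$ to be abelian); in particular $\C_N(A)=1$. It follows that $\soc(N)=A$: a minimal normal subgroup $M$ of $N$ either meets $A$ non-trivially, whence $M\le A$ by minimality, or centralises $A$ and so lies in $\C_N(A)=1$; conversely, for each orbit of $N$ on the simple direct factors of $A$ the product over that orbit is a minimal normal subgroup of $N$, and these products multiply to $A$. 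Since $\soc(N)$ is characteristic in $N\trianglelefteq G$, we conclude $A\trianglelefteq G$.

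For the bound, assume $A\trianglelefteq G$ and set $C=\C_G(A)\trianglelefteq G$, so that $Z=\C_Y(A)=Y\cap C$ and $C\cap B=\C_B(A)=1$. I would first reduce to the case $G=CB$: with $H=CB$ and $U=H\cap Y$, Dedekind's modular law gives a complementary factorisation $H=BU$ with $U$ cyclic, $U\cap B=1$, $|U|=[H:B]=|C|$ and $U\cap C=Z$; moreover $U$ is core-free in $H$, since a normal subgroup of $H$ contained in $U$ is normalised by each of $B$, $C$, and the abelian group $Y\ge U$, hence by $G=\langle B,Y\rangle$, and so is trivial. As $B$ remains monolithic with monolith $A$, is still not core-free in $H$, and $[H:B]$ is unchanged, it suffices to treat $(H,B,U)$, so we may assume $G=C\rtimes B$ with $B\cong G/C$ and $|C|=|Y|=[G:B]$. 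In this situation put $N=\C_B(C)$; then $N\trianglelefteq G$, being normal in $B$ and centralised — hence normalised — by $C$, while $[\Core_G(B),C]\le\Core_G(B)\cap C\le B\cap C=1$ gives $\Core_G(B)\le\C_B(C)=N$, so that $N=\Core_G(B)$. In particular $A\le N$, $C\cap N=1$, $[C,N]=1$, and $B/N=B/\C_B(C)$ embeds in $\Aut(C)$.

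Suppose now that $Z\neq 1$; I must show $|Z|<|B/A|$. Since $Z$ centralises $A$ and $A\cap Z\le B\cap Y=1$, we have $AZ=A\times Z$, and $\Core_G(AZ)=A$: if $M=\Core_G(AZ)$ then $A\le M\le A\times Z$ forces $M=A\times(M\cap Z)$, while $M\cap Z=M\cap C$ is an intersection of normal subgroups of $G$ lying in $Y$, hence trivial. Thus $G/A$ acts faithfully and transitively on $\Gamma=G/AZ$, a set of size $|B/A|\cdot[Y:Z]$, with cyclic, core-free point stabiliser $AZ/A\cong Z$; furthermore $CA/A\cong C$ is a normal subgroup of $G/A$ whose orbits on $\Gamma$ form a block system with $|B/A|$ blocks of size $d:=[Y:Z]$, on which $(G/A)/(CA/A)\cong B/A$ acts regularly, and the point stabiliser $Z$ lies inside the block kernel $CA/A$. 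The feature still to be exploited is that $Y=\langle y\rangle$ is \emph{cyclic} of order $|C|=[G:B]$: its image in $G/C\cong B$ has order $d$, so $y^{d}$ generates $Z$, and core-freeness of $Y$ means that no non-trivial $B$-invariant subgroup of $Z$ can be normal in $C$. In the special case that $C$ is cyclic this already yields $Z=1$, since then $Z\le C$ is characteristic in $C\trianglelefteq G$, hence normal in $G$; and the degenerate case $|B/A|=1$ (so $B=A$ is simple and $A\trianglelefteq G$) is likewise immediate, as $G=AY$ then normalises $Z$ (which is centralised by $A$ and by the abelian $Y$), forcing $Z=1$.

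What remains — and this is the step I expect to be the main obstacle — is to derive $|Z|<|B/A|$ from the block configuration above when $C$ is not cyclic. I would isolate this as a lemma: if $Z$ is a cyclic, core-free subgroup of a group $\overline C\rtimes Q$ with $Q$ acting faithfully on $\overline C$ and $Z\le\overline C$, then $|Z|$ is controlled by $|Q|$ (here $Q\cong B/N$, with $|B/A|=|N/A|\cdot|B/N|$); one then feeds in the constraint that the ambient cyclic group $Y$ has order $[G:B]$, so that $C$ must contain an element of order $|Z|$ while the same element $y^{d}$ governs how $\langle y\rangle$ meets $C$, and the monolithic structure of $B$ (the embeddings $A\hookrightarrow N$ and $B/A\hookrightarrow\Out(A)$) supplies the remaining slack. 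Proving this lemma, presumably by a Sylow- and Remak-type analysis of the $G$-conjugates of $Z$ inside $C$, is where the real work lies.
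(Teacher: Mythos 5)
Your proposal addresses only the second half of the theorem. The classification into cases (1)--(5) when $B$ \emph{is} core-free in $G$ is not touched at all, and that is where the bulk of the work lies: the paper first shows (Proposition~\ref{newprop}) that $G$ has a unique minimal normal subgroup and that it contains $A$, then proves by induction on $|G|$ (Corollary~\ref{cor:final}) that $G$ is almost simple, using Li and Praeger's CFSG-based classification of permutation groups with a regular cyclic subgroup (Theorem~\ref{thm:LiPraeger}) to handle the quasiprimitive and imprimitive configurations, and finally reads off the five families from lines 3 to 6 of Table 1, excluding line 4 because there the point stabiliser has a non-trivial elementary abelian normal subgroup and so is not monolithic. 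Nothing in your set-up produces this half of the statement, so as it stands the theorem is not proved.

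Even for the non-core-free case your argument is incomplete. Your first paragraph does correctly establish $A\trianglelefteq G$ (a direct verification that $\soc(\Core_G(B))=A$, playing the role of the paper's Corollary~\ref{soceqsoccor}), and your special cases ($\C_G(A)$ cyclic, or $B=A$) are fine; but the general inequality $|Z|<|B/A|$ is precisely the step you leave as an unproven ``lemma'', and the route through $C=\C_G(A)$ and the block configuration does not obviously close. The paper's Proposition~\ref{prop:notcorefree} avoids all of this by working in $\overline{G}=G/A$: one shows $\overline{Z}$ is core-free in $\overline{G}$ (a normal $\overline{X}\le\overline{Z}$ lifts to $AX\cong A\times X$ normal in $G$ with $X$ characteristic in $AX$, hence normal in $G$, contradicting core-freeness of $Y$); letting $\overline{K}$ be the core of $\overline{Y}$ in $\overline{G}$, the subgroup $\overline{Z}\cap\overline{K}$ is normal in $\overline{G}$ (subgroups of a cyclic normal subgroup are characteristic) and hence trivial, so $|Z|\le|\overline{Y}|/|\overline{K}|$; and then Lucchini's theorem (Theorem~\ref{thm:Lucc}) applied to the core-free cyclic subgroup $\overline{Y}/\overline{K}$ of $\overline{G}/\overline{K}$ gives $|\overline{Y}|/|\overline{K}|<|\overline{G}|/|\overline{Y}|\le|\overline{B}|=|B/A|$, with $Z=1$ in the degenerate case $\overline{Y}=\overline{G}$. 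Bringing Lucchini's bound to bear on the quotient $G/A$ is the missing idea; without it (or an equivalent substitute) your approach does not yield the stated bound.
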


\begin{corollary}
\label{cor:main}
Let $G$ be a finite group with a complementary factorisation $G=BY$, where $B$ is non-abelian and simple, and $Y$ is cyclic and core-free in $G$. Then either $B$ is normal in $G$, or one of the cases $(2)$, $(3)$ or $(5)$ from Theorem~$\ref{thm:main}$ occurs.  Hence in particular, every skew morphism of a non-abelian simple group $B$ is an automorphism, unless $B$ is $\Alt(5)$, $\M_{22}$ or $\Alt(k)$ for some even $k \ge 6$.
\end{corollary}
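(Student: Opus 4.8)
The plan is to derive this corollary directly from Theorem~\ref{thm:main}. A non-abelian simple group $B$ is monolithic in a trivial way: its unique minimal normal subgroup is $B$ itself, which is non-abelian. So Theorem~\ref{thm:main} applies to the triple $(G,B,Y)$, and I would split the argument according to whether $B$ is core-free in $G$.

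If $B$ is core-free in $G$, then one of the five cases of Theorem~\ref{thm:main} holds; but in case $(1)$ the group $B$ is isomorphic to $\Sym(n-1)$ with $n-1\ge 5$, and in case $(4)$ it is isomorphic to $\M_{10}$, and none of these groups is simple. Hence only cases $(2)$, $(3)$ and $(5)$ can occur. If instead $B$ is not core-free in $G$, then the core of $B$ in $G$ is a non-trivial normal subgroup of $G$ contained in $B$, hence a non-trivial normal subgroup of $B$; since $B$ is simple this core equals $B$, so $B$ is normal in $G$. Combining the two cases proves the first assertion. (This is consistent with the final sentence of Theorem~\ref{thm:main}: when $B$ is simple its monolith $A$ equals $B$, so $B/A$ is trivial, the inequality $|Z|<|B/A|$ cannot hold, and $Z=\C_Y(B)$ is forced to be trivial.)

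For the statement about skew morphisms, let $\varphi$ be a skew morphism of a non-abelian simple group $B$. As recalled in the introduction, the skew product group $G=B\langle\varphi\rangle$ realises a complementary factorisation $G=BY$ with $Y=\langle\varphi\rangle$ cyclic and core-free in $G$, so the first part of the corollary applies to this $G$. Here I would use the standard fact that, identifying $B$ with its left regular representation in $\Sym(B)$, the normaliser of $B$ in $\Sym(B)$ is the holomorph $B\rtimes\Aut(B)$, in which the stabiliser of the point $1_B$ is exactly $\Aut(B)$; since every skew morphism fixes $1_B$, it follows that $\varphi$ is an automorphism of $B$ if and only if $\varphi$ normalises $B$, that is, if and only if $B$ is normal in $G$. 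Now the groups $\Alt(n-1)$ arising in case $(2)$, with $n$ odd and $n\ge 7$, are precisely the groups $\Alt(k)$ with $k$ even and $k\ge 6$; so if $B$ is not isomorphic to $\Alt(5)$, to $\M_{22}$, or to any such $\Alt(k)$, then none of cases $(2)$, $(3)$, $(5)$ can occur, hence $B$ is normal in $G$ and $\varphi\in\Aut(B)$.

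Granting Theorem~\ref{thm:main}, there is no serious obstacle in this argument. The only points requiring a little care are confirming that $B$ is genuinely non-simple in cases $(1)$ and $(4)$, matching the arithmetic condition ``$n$ odd, $n\ge 7$'' in case $(2)$ with the condition ``$\Alt(k)$ with $k$ even, $k\ge 6$'', and invoking the holomorph description of $\mathrm{N}_{\Sym(B)}(B)$ to pass between ``$\varphi$ is an automorphism'' and ``$B$ is normal in the skew product group''. Note also that the corollary asserts only this implication, not the existence of proper skew morphisms for the exceptional groups, so no further work is needed there. The genuine difficulty is entirely contained in the proof of Theorem~\ref{thm:main} itself.
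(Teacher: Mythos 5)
Your proposal is correct and follows essentially the same route as the paper, which deduces the corollary (in Section~\ref{sec:skewsimple}) by exactly your dichotomy: if $B$ is not core-free then, being simple, it is normal in $G$ and the associated skew morphism is an automorphism, while if $B$ is core-free then Theorem~\ref{thm:main} leaves only cases (2), (3), (5), since $\Sym(n-1)$ and $\M_{10}$ are not simple. The only cosmetic difference is that where the paper invokes Lemma~\ref{lem:kernel} for the equivalence ``$\varphi$ is an automorphism if and only if $B$ is normal in the skew product group'', you rederive it via the holomorph $\N_{\Sym(B)}(B)=B\rtimes\Aut(B)$, which is an equally valid justification.
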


Here we note that the upper bound on the order of the centraliser $Z = \C_Y(A)$ given in Theorem~\ref{thm:main} is important, for a reason we can explain as follows.  First, since $G=BY$ and $B \cap Y = \{1\}$ we have
\begin{equation*}
|G|= |B||Y| = |B||Y/Z||Z|.
\end{equation*}
Now for a given non-abelian group $A$, there are only finitely many possibilities for a monolithic group $B$ with monolith $A$, because $B$ must be isomorphic to a subgroup of $\Aut(A)$, and in particular, $|B| \le |\Aut(A)|$. Moreover, if $B$ is not core-free in $G$, then $A$ is normal in $G$ and so $Y/Z = Y/\C_Y(A)$ acts faithfully on $A$.  It follows that $|Y/Z|$ is bounded above by the maximum order of an element in $\Aut(A)$. Thus any upper bound on $|Z|$ gives also an upper bound on $|G|$ in terms of $\Aut(A)$, and hence in terms of $A$. See for example Section~\ref{sec:newnew}, where we apply this to determine all skew morphisms of many monolithic groups.

Also we note that using Corollary~\ref{cor:main}, we can find all proper skew morphisms of all simple groups  (see Section \ref{sec:skewsimple}). On the other hand, we cannot say much about the regular Cayley maps for these groups. This contrasts with the situation for cyclic groups, for which the regular Cayley maps were completely classified in \cite{ConderTucker}, but the skew morphisms are still not all known.

\smallskip
The proof of Theorem \ref{thm:main} is presented in Section \ref{sec:proof}, after some more background in Section~\ref{sec:background}.  Then in Section~\ref{sec:remarks} we give a variety of interesting examples regarding Theorem \ref{thm:main}, including a family of examples that show that our upper bound on the order of $Z = \C_Y(A)$ is sharp. Finally, in Section~\ref{sec:skew} we explain how to determine (and count) the skew morphisms of non-abelian simple groups and more general monolithic groups, using Theorem \ref{thm:main}.

\section{Further background}
\label{sec:background}

In this section we give some more background from group theory, and from the theory of skew morphisms.

All groups in this paper are assumed to be finite. We let $\C_n$ denote the cyclic group of order $n$, and $\Alt(n)$ and $\Sym(n)$ the alternating and symmetric groups of degree $n$, and $\M_n$ the Mathieu group of degree $n$.

The \emph{core} of a subgroup $H$ in a group $G$ is the largest normal subgroup of $G$ contained in $H$, and so $H$ is core-free in $G$ if and only if the core of $H$ in $G$ is trivial.

A group $B$ is \emph{almost simple} if there is a non-abelian simple group $A$ such that $B$ can be embedded between $A$ and its automorphism group, that is, $A \le B \le \Aut(A)$.

Every minimal normal subgroup $N$ of a finite group $G$ is a direct product of isomorphic simple groups (this is, $N \cong T^k = T\times\! \stackrel{k}{\dots} \!\times T$ for some simple group $T$ and some positive integer~$k$); see~\cite[Item 3.3.15]{Robinson}. Note that $T$ is cyclic of some prime order $p$ whenever $N$ is soluble, and in that case $N$ is an elementary abelian $p$-group, while otherwise $N$ is non-abelian. The \emph{socle} of a finite group $G$ is the subgroup generated by all of the minimal normal subgroups of $G$, and denoted by $\soc(G)$.  In particular, if $G$ is monolithic with monolith~$A$, then $\soc(G) = A$. The \emph{soluble radical} of $G$ is the largest soluble normal subgroup of $G$.

\medskip
Next, recall that a skew morphism $\varphi$ of a group $B$ is a permutation $\varphi: B \rightarrow B$ fixing the identity element of $B$ and having the property that $\, \varphi(ab) = \varphi(a) \varphi^{\pi(a)}(b)\,$ for all $a,b\in B$, where $\pi: B \rightarrow \NN$ is its associated power function.

The \emph{order} $|\varphi|$ of $\varphi$ is the smallest positive integer $k$ for which $\varphi^k$ is the identity permutation on $B$.   We can thus view $\pi$ as a function from $B$ to $\ZZ_{|\varphi|}$.

The set of all the elements $a\in B$ for which $\pi(a)=1$ forms a subgroup of $B$, denoted by $\ker \varphi$  and called the \emph{kernel} of $\varphi$.  It is easy to see that two elements $a,b\in B$ belong to the same  right coset of $\ker \varphi$ in $B$ if and only if $\pi(a)=\pi(b)$, and that $\varphi$ is an automorphism of $B$ if and only if $\ker\varphi = B$.
Also the following lemma provides a helpful method for determining whether an element $y \in Y$ from a skew product $G = BY$ determines an automorphism or a proper skew morphism of $B$.    This was already observed in~\cite{ConderJajcayTucker}, but we also prove it here.

\begin{lemma}
\label{lem:kernel}
Let $G=BY$ be a skew product group for the group $B$,  and let $\varphi$ be the skew morphism of $B$ given by left multiplication of $B$ by a generator $y$ of $Y$.  Then $\ker \varphi$ is the largest subgroup  of $B$ normalised by $Y$. In particular, $\varphi$ is an automorphism of $G$ if and only if $B$ is normal in $G$.
\end{lemma}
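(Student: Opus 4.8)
The plan is to argue entirely inside the group $G=BY$, writing $y$ for the chosen generator of $Y$. The key first observation is that $\ker\varphi$ admits a transparent description in $G$: for $b\in B$ one has $b\in\ker\varphi$ if and only if $yby^{-1}\in B$, so that $\ker\varphi=B\cap y^{-1}By$. Indeed, if $\pi(b)=1$ then $yb=\varphi(b)y$ and so $yby^{-1}=\varphi(b)\in B$; conversely, since $yb=\varphi(b)y^{\pi(b)}$ we have $yby^{-1}=\varphi(b)y^{\pi(b)-1}$ with $\varphi(b)\in B$, so if $yby^{-1}\in B$ then $y^{\pi(b)-1}\in B\cap Y=\{1\}$, and as $1\le\pi(b)<|Y|=|y|$ (recall that $\langle\varphi\rangle$ is core-free, whence $|\varphi|=|Y|$) this forces $\pi(b)=1$.

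With this in hand, one inclusion of the main assertion is immediate: if $N\le B$ is normalised by $Y$ then, for every $b\in N$, $yby^{-1}\in N\subseteq B$ and hence $b\in\ker\varphi$, so $\ker\varphi$ contains every subgroup of $B$ normalised by $Y$. For the reverse inclusion it is enough to show that $\ker\varphi$ is itself normalised by $Y$, for then it is the (unique) largest such subgroup. Since $yby^{-1}=\varphi(b)$ for all $b\in\ker\varphi$, we have $y(\ker\varphi)y^{-1}=\varphi(\ker\varphi)$, so what is needed is that $\ker\varphi$ is invariant under $\varphi$. I expect this to be the main obstacle: it has to be extracted from the defining identity $\varphi(ab)=\varphi(a)\varphi^{\pi(a)}(b)$ together with the fact recalled in Section~\ref{sec:background} that $\pi$ is constant exactly on the right cosets of $\ker\varphi$; once it is available, the remaining manipulations are purely formal.

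Finally, the ``in particular'' clause follows directly from the description of $\ker\varphi$ above (and also from the main assertion, once proved): $\ker\varphi=B$ says $yBy^{-1}\subseteq B$, hence $yBy^{-1}=B$, and since $B$ also normalises itself while $G=BY$, this is equivalent to $B\trianglelefteq G$; conversely $B\trianglelefteq G$ gives $yby^{-1}\in B$ for all $b\in B$, so $\ker\varphi=B$. Combining this with the fact that $\varphi$ is an automorphism of $B$ precisely when $\ker\varphi=B$ then yields the stated equivalence.
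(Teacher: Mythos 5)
Your description of the kernel inside $G$ (namely that $b\in\ker\varphi$ if and only if $yby^{-1}\in B$, so $\ker\varphi=B\cap y^{-1}By$), the resulting fact that $\ker\varphi$ contains every subgroup of $B$ normalised by $Y$, and your proof of the final clause are all correct, and this much coincides with the paper's own very short argument. The step you flag as the remaining obstacle --- that $\ker\varphi$ is invariant under $\varphi$, equivalently normalised by $Y$ --- is a genuine gap in your write-up, and it is one you cannot close, because that assertion fails in general. Take $G=\Sym(4)$, let $B\cong\Sym(3)$ be the stabiliser of the point $4$, and let $Y=\langle y\rangle$ with $y=(1,2,3,4)$. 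Then $G=BY$ with $B\cap Y=\{1\}$ and $Y$ core-free in $G$, so this is a skew product group for $B$. Here $\ker\varphi=B\cap y^{-1}By$ is the pointwise stabiliser of $4$ and one further point, a subgroup of order $2$ generated by a transposition, whereas the only subgroup of $B$ normalised by $Y$ is trivial: such a subgroup lies in $\bigcap_{j}y^{-j}By^{\,j}$, which is trivial because $Y$ is transitive. Equivalently, $\varphi(\ker\varphi)\neq\ker\varphi$ for this proper skew morphism of $\Sym(3)$; this is exactly why Section~\ref{sec:background} is careful to say that $\varphi$ restricts to an automorphism of $\ker\varphi$ only \emph{if} the latter is preserved by $\varphi$.

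So your instinct that $\varphi$-invariance of the kernel cannot simply be extracted from the identity $\varphi(ab)=\varphi(a)\varphi^{\pi(a)}(b)$ and the coset description of $\pi$ was sound, and you should not attempt it. What your first paragraph actually proves --- and what the paper's one-line proof of Lemma~\ref{lem:kernel} establishes, despite the stronger wording of the statement --- is that $\ker\varphi$ is the largest subgroup $H$ of $B$ satisfying $yHy^{-1}\leq B$; in particular it contains every subgroup of $B$ normalised by $Y$, and $\ker\varphi=B$ exactly when $B$ is normal in $G$. That weaker formulation, together with the final clause, is all that is used elsewhere in the paper (for instance in Corollary~\ref{restrictsTo}, where one only needs $A\leq\ker\varphi$ once $A$ is normal in $G$, and in the reduction to skew product groups in which $B$ is not normal). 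If you restate the first assertion in that form, your argument is complete and correct; as it stands, the reverse inclusion you postpone is not merely missing but false.
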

\begin{proof}
For $b\in B$, clearly $b\in \ker \varphi$ if and only if $yb=b'y$ for some $b' \in B$, which happens if and only  if $yby^{-1} = b' \in B$. This proves the first assertion. The second one follows easily from the facts that  $\varphi$ is an automorphism of $B$ if and only if $\ker \varphi = B$, and that $B$ is normal in $G = BY$  if and only if it is normalised by $Y$.
\end{proof}

More generally, the  subgroup $\ker \varphi$ is not always normal in $B$, but it is non-trivial whenever $B$ is non-trivial (see~\cite[Theorem 4.3]{ConderJajcayTucker}).
Also the restriction of $\varphi$ gives an isomorphism from $\ker \varphi$ to $\varphi(\ker \varphi)$, and hence $\varphi$ restricts to an automorphism of $\ker\varphi$ if the latter is preserved by $\varphi$. In the case where $B$ is abelian, every skew morphism $\varphi$ of $B$ induces an automorphism of $\ker\varphi$ (see~\cite[Lemma 5.1]{ConderJajcayTucker}).

An immediate consequence of non-triviality of the kernel is that every skew morphism of a cyclic group of prime order is an automorphism.  Hence to classify skew morphisms of simple groups, it suffices to consider non-abelian simple groups. Moreover, by our comments in the Introduction (explained in more detail in~\cite[Lemma 4.1]{ConderJajcayTucker}), it is sufficient for us to consider skew product groups $G = BY$ with $Y$ cyclic and core-free in $G$,  because every skew morphism of $B$ arises in this way.

\section{Proof of Theorem~\ref{thm:main}}
\label{sec:proof}

\subsection{Preliminaries}
\label{prelims}

We begin with a number of preliminary observations that are straightforward exercises, but we include their proofs for completeness.

\begin{lemma}
\label{lemma:basic}
Let $G$ be a group with subgroups $B$ and $C$ such that $G=BC,$ and $B$ is core-free and $C$ is abelian.
Then $B\cap C=\{1\}$.
\end{lemma}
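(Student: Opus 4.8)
Write $D = B \cap C$; the goal is to show $D = \{1\}$. The plan is to prove that $D$ is contained in the core of $B$ in $G$, which is trivial by hypothesis.

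The key observation is the following. Let $d \in D$ and let $g \in G$ be arbitrary. Since $G = BC$, we may write $g = bc$ with $b \in B$ and $c \in C$. Then, because $d \in D \subseteq C$ and $C$ is abelian, we have $cdc^{-1} = d$, and hence
\begin{equation*}
g d g^{-1} = b (c d c^{-1}) b^{-1} = b d b^{-1} \in B,
\end{equation*}
the final membership holding because $d \in D \subseteq B$. Thus \emph{every} $G$-conjugate of every element of $D$ lies in $B$.

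From this it follows at once that the normal closure $N = \langle D^{G} \rangle$ of $D$ in $G$ is contained in $B$: it is generated by conjugates $gdg^{-1}$ with $d \in D$ and $g \in G$, and we have just shown each such generator lies in $B$. But $N$ is a normal subgroup of $G$ contained in $B$, so $N$ lies in the core of $B$ in $G$; since $B$ is core-free, $N = \{1\}$, and therefore $D \subseteq N = \{1\}$, as required.

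The only nontrivial point is the displayed identity $gdg^{-1} = bdb^{-1}$, which is where commutativity of $C$ (together with $d \in C$) is used; everything after that is a direct application of core-freeness of $B$. No serious obstacle is expected.
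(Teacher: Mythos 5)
Your proof is correct. It is in essence the same argument as the paper's, just translated out of permutation-group language: the paper lets $G$ act on the coset space $(G\!:\!B)$, notes the action is faithful because $B$ is core-free, and uses that the transitive abelian group $C$ must act regularly, so that $B\cap C$ (a point stabiliser in $C$) is trivial. Your computation $gdg^{-1}=b(cdc^{-1})b^{-1}=bdb^{-1}\in B$ is exactly the statement that $d$ fixes every coset $Bg$, i.e.\ that $d$ lies in the kernel of that coset action, which is the core of $B$; packaging this as ``the normal closure $\langle D^G\rangle$ lies in $B$'' makes the core membership explicit without invoking any facts about transitive abelian permutation groups. So the two proofs buy the same thing: yours is slightly more self-contained and element-wise, the paper's is shorter given standard permutation-group vocabulary that it uses elsewhere anyway.
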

\begin{proof}
Consider the natural action of $G$ on the right coset space $(G\!:\!B)$, with point-stabiliser $B$. Because $B$ is core-free, this action is faithful. But also $G=BC$, so $C$ acts transitively on $(G\!:\!B)$, and then since $C$ is abelian, it must act regularly (because the stabiliser of every point is the same), and therefore $B\cap C=\{1\}$.
\end{proof}

\begin{lemma}
\label{soceqsoc}
If $G$ has trivial soluble radical, and $\soc(G)\leq H\leq G$, then $\soc(H)=\soc(G)$, and therefore $H$ has trivial soluble radical.
\end{lemma}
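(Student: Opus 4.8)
Set $N = \soc(G)$. The plan is first to record the two facts about $N$ that do the real work. Since $G$ has trivial soluble radical, no minimal normal subgroup of $G$ is abelian, so (by the structure of minimal normal subgroups recalled above) $N$ is a direct product of non-abelian simple groups; in particular $\Z(N) = 1$. Next, $\C_G(N) = 1$: indeed $\C_G(N)$ is normal in $G$, so were it non-trivial it would contain a minimal normal subgroup $M$ of $G$, and then $M \le N \cap \C_G(N) = \Z(N) = 1$, a contradiction.

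The second step is to show $N \le \soc(H)$. By hypothesis $N \le H$, and the simple direct factors of $N$ are precisely its minimal normal subgroups, so conjugation by elements of $H$ permutes them; the product of the factors in a single $H$-orbit is then a minimal normal subgroup of $H$ (using the routine fact that a normal subgroup of a direct product of non-abelian simple groups is a subproduct), and running over all orbits gives $N \le \soc(H)$.

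For the reverse inclusion, take any minimal normal subgroup $L$ of $H$. Then $[L,N]$ is normal in $H$ and contained in $L \cap N$. If $[L,N] = 1$ then $L \le \C_H(N) \le \C_G(N) = 1$, contradicting minimality of $L$; hence $[L,N] \ne 1$, and minimality of $L$ forces $L = [L,N] \le N$. Therefore $\soc(H) \le N$, and combining the two inclusions yields $\soc(H) = N = \soc(G)$.

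Finally, to see that the soluble radical $R$ of $H$ is trivial: $R \cap N$ is a soluble normal subgroup of $N$, and since $N = T_1 \times \cdots \times T_k$ with each $T_i$ non-abelian simple, each $R \cap T_i \trianglelefteq T_i$ is $1$ or $T_i$, the latter being impossible as $T_i$ is not soluble; hence $R \cap N \le \bigcap_i \C_N(T_i) = \Z(N) = 1$, so $[R,N] \le R \cap N = 1$ and thus $R \le \C_H(N) \le \C_G(N) = 1$. The only point that needs any care is the elementary bookkeeping with direct products of non-abelian simple groups (that $H$ permutes the factors, and that a soluble normal subgroup of such a product is trivial), so I expect no real obstacle.
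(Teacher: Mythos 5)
Your proof is correct and rests on the same essential mechanism as the paper's: in a group with trivial soluble radical the socle is a product of non-abelian simple groups with trivial centraliser, so a minimal normal subgroup of $H$ meeting $\soc(G)$ trivially would have to centralise $\soc(G)$, which is impossible. The only difference is one of self-containedness: you prove the trivial-centraliser fact (which the paper cites from Dixon--Mortimer), and you spell out the inclusion $\soc(G)\leq\soc(H)$ via the orbit argument on simple factors and the final soluble-radical claim, both of which the paper treats as routine.
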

\begin{proof}
Suppose to the contrary that $\soc(H) \ne \soc(G)$.  Then some minimal normal subgroup $N$ of $H$ is not contained in $\soc(G)$.  On the other hand, let $M$ be a minimal normal subgroup of $G$. Then $M\leq\soc(G)\leq H$ and so $M$ is a normal subgroup of $H$. Also $N$ is a minimal normal subgroup of $H$ but $N\nleq\soc(G)$, and so $N\cap M=\{1\}$, and therefore $N$ centralises $M$.  Since $M$ was an arbitrary minimal normal subgroup of $G$, it follows that $N$ centralises $\soc(G)$. But in a group with trivial soluble radical, the socle has trivial centraliser (see \cite[Corollary 4.3A]{DixonMortimer} for example), and so $N$ cannot centralise $\soc(G)$, contradiction.    Thus $\soc(H)=\soc(G)$, and the rest follows easily.
\end{proof}

\begin{corollary}
\label{soceqsoccor}
Let $G$ be a group with a monolithic subgroup $B$, and let $A$ be the monolith of $B$. If $B$ is not core-free in $G$, then $A$ is normal in $G$.   
\end{corollary}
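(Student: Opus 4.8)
\subsection*{Proposal for the proof of Corollary~\ref{soceqsoccor}}

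The plan is to pass to $K$, the core of $B$ in $G$ (that is, the largest normal subgroup of $G$ contained in $B$). Since $B$ is assumed not to be core-free, $K$ is a non-trivial normal subgroup of $G$, and clearly $K\leq B$. I would first record that $B$ itself has trivial soluble radical: if the soluble radical of $B$ were non-trivial, it would contain a minimal normal subgroup of $B$, which must be the monolith $A$ since $A$ is the \emph{unique} minimal normal subgroup of $B$; this would force $A$ to be soluble, contradicting the fact that $A$, being a direct product of non-abelian simple groups, is insoluble. Hence the soluble radical of $B$ is trivial, and consequently $\soc(B)=A$.

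Next, since $K$ is a non-trivial normal subgroup of $B$, it contains the monolith, so $\soc(B)=A\leq K\leq B$. I would then apply Lemma~\ref{soceqsoc} with $B$ in the role of ``$G$'' and $K$ in the role of ``$H$'': as $B$ has trivial soluble radical, the lemma yields $\soc(K)=\soc(B)=A$.

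Finally, the socle is a characteristic subgroup of any finite group, so $A=\soc(K)$ is characteristic in $K$; since $K$ is normal in $G$, it follows that $A$ is normal in $G$, as required. I do not expect a genuine obstacle here: the substantive work has already been packaged into Lemma~\ref{soceqsoc}, and the only points needing a line of justification are that $B$ has trivial soluble radical (so that the lemma applies) and that $A\leq K$ (which is immediate from monolithicity of $B$), together with the standard fact that the socle is characteristic.
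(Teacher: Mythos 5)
Your proposal is correct and follows essentially the same route as the paper: pass to the core $K$ of $B$ in $G$, note $A=\soc(B)\leq K\leq B$, apply Lemma~\ref{soceqsoc} (with $B$ in the role of the ambient group) to get $\soc(K)=A$, and conclude that $A$ is characteristic in $K$ and hence normal in $G$. The only difference is that you spell out the implicit prerequisites (that $B$ has trivial soluble radical and that $A\leq K$), which the paper leaves to the reader.
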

\begin{proof}
Let $K$ be the core of $B$ in $G$. Then $\soc(B) = A\leq K \leq B$, and by Lemma~\ref{soceqsoc} we find that $\soc(K)=A$, making $A$ characteristic in $K$ and hence normal in $G$. 
\end{proof}

\begin{lemma}
\label{lemma:prop}
Let $G$ be a group with subgroups $B$ and $C$ such that $G=BC$ and $B\cap C=\{1\}$. If $N$ is a normal subgroup of $G$, then  $|N|=|BN\cap C||B\cap N|=|B\cap CN||C\cap N|$.
\end{lemma}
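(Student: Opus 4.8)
The plan is to reduce everything to the elementary product formula $|HK| = |H||K|/|H\cap K|$ (valid for subgroups $H,K$ of a finite group, as a count of cosets, regardless of whether $HK$ is itself a subgroup), combined with the observation that $BN$ and $CN$ are subgroups of $G$ because $N$ is normal in $G$.

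First I would record the baseline count: since $G = BC$ and $B\cap C = \{1\}$, the product formula gives $|G| = |B||C|$. Next, because $N\trianglelefteq G$, the set $BN$ is a subgroup of $G$; it contains $B$, so
\[
(BN)C \supseteq BC = G,
\]
and hence $G = (BN)C$. Applying the product formula to the subgroups $BN$ and $C$ yields $|G| = |BN||C|/|BN\cap C|$. Substituting $|BN| = |B||N|/|B\cap N|$ and $|G| = |B||C|$, and then cancelling the common factor $|B||C|$, leaves
\[
1 = \frac{|N|}{|B\cap N|\,|BN\cap C|},
\]
that is, $|N| = |B\cap N|\,|BN\cap C|$, which is the first claimed equality. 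Interchanging the roles of $B$ and $C$ throughout the same argument (now using that $CN$ is a subgroup and that $G = B(CN)$) gives $|N| = |C\cap N|\,|B\cap CN|$, which is the second equality, and completes the proof.

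I do not anticipate a serious obstacle. The only points needing care are that $BN$ (respectively $CN$) genuinely is a subgroup, which is immediate from the normality of $N$, and that the product formula is invoked in its cardinality form, so that it is harmless if some of the product sets were a priori not subgroups — in our setting $(BN)C$ and $B(CN)$ both equal $G$ anyway. Thus the whole statement is a short counting argument, with the normality of $N$ doing the only structural work.
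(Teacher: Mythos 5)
Your proof is correct, and it takes a cleaner route than the paper's. The paper starts from the factorisation $G=(BN)(CN)$, which via the product formula yields the symmetric identity $|N|^2=|BN\cap CN|\,|B\cap N|\,|C\cap N|$, and then needs Dedekind's modular law, $(BN\cap C)N=BN\cap CN$, to convert $|BN\cap CN|$ into $\tfrac{|BN\cap C|\,|N|}{|C\cap N|}$ before the first equality drops out; the second follows by symmetry, as in your argument. You instead work with the asymmetric decomposition $G=(BN)C$ (and then $G=B(CN)$), applying the cardinality form of the product formula $|HK|=|H||K|/|H\cap K|$ directly to the subgroup pairs $(BN,C)$ and $(B,CN)$; since $(BN)C\supseteq BC=G$ the product set is all of $G$, and substituting $|BN|=|B||N|/|B\cap N|$ and $|G|=|B||C|$ gives each equality in one line, with no need for the modular law or the intermediate squared identity. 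Both arguments use only the normality of $N$ (to know $BN$ and $CN$ are subgroups) and the coset-counting product formula, but yours buys a shorter derivation by choosing the decomposition that isolates one equality at a time, whereas the paper's symmetric set-up produces both equalities from a single identity at the cost of one extra lemma. Your point that the product formula is valid for arbitrary subgroup pairs, independently of whether the product set is a subgroup, is exactly the care needed for the step involving $B(CN)$, so there is no gap.
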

\begin{proof}
First $G=(BN)(CN)$, and $|BN| = |B||N|/|B \cap N|$ and $|CN| = |C||N|/|C \cap N|$, so
\begin{equation*}
|B||C|=|G|=|(BN)(CN)| = \frac{|BN||CN|}{|BN\cap CN|}=\frac{|B||C||N|^2}{|BN\cap CN||B\cap N||C\cap N|}
\end{equation*}
and it follows that
\begin{equation}\label{eqeqbas}
|N|^2=|BN\cap CN||B\cap N||C\cap N|.
\end{equation}
Next by Dedekind's modular law ~\cite[Item 1.3.14]{Robinson} we have $(BN\cap C)N=BN\cap CN$,  and so
$$|BN\cap CN|=|(BN\cap C)N| = \frac{|BN\cap C||N|}{|BN\cap C\cap N|}=\frac{|BN\cap C||N|}{|C\cap N|}.$$
Combining this with \eqref{eqeqbas} gives the first equality $|N|=|BN\cap C||B\cap N|$, and then exchanging the roles of $B$ and $C$  gives the second equality.
\end{proof}

Next, the following theorem proved by  Lucchini in \cite{Lucchini} will be helpful. (It was also used to prove Theorems 4.2 and 4.3 on orders and kernels of skew morphisms in \cite{ConderJajcayTucker}.)

\begin{theorem}[\cite{Lucchini}]
\label{thm:Lucc}
If  $C$ is a core-free cyclic proper subgroup of a group $G$, then $|C|<|G:C|$.
\end{theorem}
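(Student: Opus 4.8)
The plan is to recast the statement as one about permutation groups: since $C$ is core-free in $G$, the action of $G$ on the right coset space $(G\!:\!C)$ is faithful and transitive, of degree $m=|G:C|$, with point stabiliser $C$ cyclic of order $n=|C|$, so the claim $|C|<|G:C|$ becomes $n<m$. I would argue by induction on $|G|$, with trivial base cases. Let $N$ be a minimal normal subgroup of $G$. Faithfulness forces $N\neq 1$, and core-freeness of $C$ forces $N\not\leq C$ (else $N$ would lie in the core of $C$ in $G$, which is trivial), so the $N$-orbits form a nontrivial block system with blocks of size $k=|N:N\cap C|>1$, where $k\mid m$.

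First I would dispose of the imprimitive case, where $N$ is intransitive and $1<k<m$. Here two smaller actions are available, each with a cyclic point stabiliser: the action of $G$ on the $m/k$ blocks (whose kernel contains $N$, hence factors through a proper quotient of $G$), and the action of the block stabiliser $CN$ on a single block of size $k$ (a proper subgroup of $G$, since there is more than one block). Applying the inductive hypothesis to both and combining the resulting inequalities should recover $n<m$. I expect the delicate point here to be the bookkeeping that relates the various kernels and intersections — for instance controlling $|C\cap L|$, where $L$ is the kernel of $CN$ acting on a block — so that the two bounds genuinely multiply out to $n<m$ rather than to something weaker.

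This reduces the problem to the primitive case, where $C$ is a maximal core-free cyclic subgroup, and I would invoke the O'Nan--Scott theorem to split on the type of the socle $N$. If $N$ is abelian, then $N$ is elementary abelian of order $p^d=m$ acting regularly, so $G=N\rtimes C$, and core-freeness makes $C$ a cyclic subgroup of $\GL(d,p)=\Aut(N)$ acting faithfully; the bound $n<m$ is then the elementary linear-algebra fact that every element of $\GL(d,p)$ has order strictly less than $p^d$ (bounding the semisimple part by the lcm of the factors $p^{d_i}-1$ and the unipotent part by the relevant power of $p$).

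The hard part will be the remaining primitive types, where $N\cong T^r$ is a product of nonabelian simple groups. Here the essential leverage is that $N\cap C$, being a subgroup of the cyclic group $C$, is itself cyclic — a severe restriction on how $N$ can act, since for the diagonal, product and twisted wreath types the stabiliser $N\cap C$ has a prescribed shape that is rarely cyclic, while for the almost simple type it forces $T\cap C$ to be a cyclic subgroup of the simple group $T$. I would use the classification of finite simple groups, together with known facts about their subgroup structure and element orders, to reduce these configurations to a short explicit list and then verify $n<m$ case by case. This CFSG-dependent analysis is where essentially all of the work lies, and it is the reason the result is quoted here rather than reproved.
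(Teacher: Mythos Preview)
The paper does not prove this statement at all: Theorem~\ref{thm:Lucc} is quoted from Lucchini~\cite{Lucchini} and used as a black box, with no argument supplied. So there is no ``paper's own proof'' to compare your proposal against; you have sketched a proof for a result the authors deliberately import.

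That said, your outline is broadly faithful to how Lucchini's original argument proceeds: induction on $|G|$, reduction via a minimal normal subgroup to the primitive case, and then a case analysis of primitive groups with cyclic point stabiliser appealing ultimately to CFSG. Two remarks on the sketch itself. First, in the imprimitive step the ``bookkeeping'' you flag is genuine: one must show the induced action on the block set and the action of the block stabiliser on a block both inherit the core-free cyclic point stabiliser hypothesis (after passing to appropriate quotients), and that the two inequalities combine multiplicatively; this is routine but not automatic. Second, in the affine primitive case your claim that every element of $\GL(d,p)$ has order strictly less than $p^d$ is correct, but one should also note that core-freeness of $C$ in $G = N \rtimes C$ is exactly what forces $C$ to embed faithfully in $\GL(d,p)$ (rather than merely in $\mathrm{AGL}(d,p)$). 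The nonabelian-socle primitive cases do, as you say, rest on CFSG-level structural information, which is precisely why the present paper cites the result rather than reproving it.
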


Finally, our proof of Theorem~\ref{thm:main} depends heavily on results of a 2012 paper by Li and Praeger~\cite{LiPraeger}, on finite permutation groups containing a regular cyclic subgroup.
In particular, we will frequently refer to the following theorem which we reproduce for the benefit of readers.

\begin{theorem}[\cite{LiPraeger}]
\label{thm:LiPraeger}
Let $G$ be a finite permutation group containing a regular cyclic subgroup. Then\emph{:} \\[-15pt]
\begin{enumerate}
\item[{\rm (a)}] $G$ is quasiprimitive if and only if $G$ is primitive and appears in Table~\emph{1;}
\item[{\rm (b)}]  $G$ is almost simple if and only if $G$ appears in lines $3$ to $6$ of Table~\emph{1;} and
\item[{\rm (c)}]  $G$ is imprimitive but has a transitive minimal normal subgroup $N$ if and only if  $G$ \newline and $N$ appear in Table~\emph{2.}
\end{enumerate}
\end{theorem}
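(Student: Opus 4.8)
The plan is to derive this as a consequence of the O'Nan--Scott theorem for quasiprimitive groups together with CFSG and the classification of factorisations of almost simple groups, which is the standard route for ``cyclic regular subgroup'' classifications. Throughout I identify the point set $\Omega$ with $\ZZ_n$ so that the regular cyclic subgroup $C=\langle c\rangle$ is the shift $x\mapsto x+1$; then $G=G_\alpha C$ with $G_\alpha\cap C=\{1\}$, so possessing a regular cyclic subgroup is exactly the statement that $G$ admits a factorisation by a cyclic subgroup $C$ of order $n=|\Omega|$. For parts (a) and (b) I would assume $G$ is quasiprimitive, write $N=\soc(G)=T^k$ with $T$ simple, and run through the eight quasiprimitive types (HA, AS, HS, HC, SD, CD, TW, PA). The recurring elementary device is that $C$ is transitive, so it permutes any $G$-invariant partition cyclically; hence every transitive quotient of $G$ inherits a regular cyclic subgroup, and any subgroup of $C$ has at most one subgroup of each order. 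Using this to control $C\cap N$ and its image in the relevant quotients, I would eliminate the diagonal and twisted-wreath types HS, HC, SD, CD, TW, since the point stabilisers and quotients arising there cannot be complemented by a single cyclic subgroup when $T$ is non-abelian simple. For the affine type HA (socle $\C_p^{\,d}$, regular of degree $p^d$) the point is that $C\cap N$ embeds in both a cyclic group and an elementary abelian group, so has order at most $p$, forcing $d=1$ and $G\le\AGL(1,p)$, which is line $1$ of Table~1; for the product-action type PA I would project $C$ onto the top group and show the resulting constraints yield no further primitive examples.

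The heart of the argument is the almost simple type AS. Here $G=G_\alpha C$ is a factorisation of an almost simple group in which one factor is cyclic of order equal to the degree, and I would invoke the classification of maximal factorisations of almost simple groups (Liebeck--Praeger--Saxl), which rests on CFSG, and extract precisely those with a cyclic factor of the right order. This should produce the natural actions of $\Sym(n)$ and $\Alt(n)$ (containing an $n$-cycle, with $n$ odd in the $\Alt(n)$ case), the projective action of the groups $\PSL(d,q)\le G\le\PGammaL(d,q)$ containing a Singer cycle, and the sporadic small cases $\PSL(2,11)$, $\M_{11}$ and $\M_{23}$, giving lines $3$ to $6$ of Table~1 and hence~(b). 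To finish~(a) I would then verify that each quasiprimitive example obtained above is in fact primitive, i.e. its point stabiliser is maximal; this shows that in the presence of a regular cyclic subgroup quasiprimitivity and primitivity coincide.

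For part (c) I assume $G$ is imprimitive but has a transitive minimal normal subgroup $N=T^k$. Choosing a maximal $G$-invariant block system $\mathcal{B}$, the induced action of $G$ on $\mathcal{B}$ is quasiprimitive, contains the regular cyclic subgroup $C^{\mathcal{B}}$, and has $N^{\mathcal{B}}$ as a transitive minimal normal subgroup; so this quotient is one of the groups already classified in~(a)/(b). I would then reconstruct the possible $G$ by analysing how such a quotient lifts, controlling the block size, the kernel of the action on $\mathcal{B}$, and the requirement that $N$ be transitive and minimal normal, to arrive at the list of pairs $(G,N)$ in Table~2.

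The main obstacle is the almost simple case: it depends essentially on CFSG and on the full machinery of factorisations of almost simple groups, and the genuinely delicate work lies in extracting exactly the cyclic factorisations, particularly in separating the family $\PSL(d,q)\le G\le\PGammaL(d,q)$ carrying a Singer cycle from near-misses with almost-cyclic factors. A secondary difficulty is the bookkeeping in part~(c), where one must confirm that each admissible quasiprimitive quotient genuinely extends to an imprimitive group retaining both a transitive socle and a regular cyclic subgroup, rather than merely arising as an abstract quotient.
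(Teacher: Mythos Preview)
There is nothing to compare: the paper does not prove this theorem. It is quoted verbatim from Li and Praeger~\cite{LiPraeger} and, as the paper says explicitly, is ``reproduce[d] for the benefit of readers''; the paper then uses it as a black box in the proof of Theorem~\ref{thm:main} and Corollary~\ref{cor:final}. So your proposal is not an alternative to the paper's proof but a sketch of the Li--Praeger argument itself.

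That said, your outline is broadly the correct strategy for how~\cite{LiPraeger} is proved: reduce via the O'Nan--Scott analysis of quasiprimitive types, eliminate the diagonal/twisted-wreath/product types by elementary arithmetic on the cyclic regular subgroup, handle the HA case by the $C\cap N\le\C_p$ observation forcing $d=1$, and in the AS case invoke the Liebeck--Praeger--Saxl classification of (maximal) factorisations of almost simple groups to extract those with a cyclic complement of the point stabiliser. One substantive point you gloss over is the equivalence in~(a): you assert you would ``verify that each quasiprimitive example obtained above is in fact primitive'', but this is the non-obvious direction and in~\cite{LiPraeger} requires real work, particularly for the $\PGL(d,q)\le G\le\PGammaL(d,q)$ family, where one must show the Singer-cycle action is primitive even though the point stabiliser need not be maximal in the full $\PGammaL$. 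Similarly, part~(c) in~\cite{LiPraeger} is not just bookkeeping: the constraints on $r$ and $k$ in the $\PSL(d,q)\times\C_r$ line of Table~2 come from a careful analysis that your sketch does not indicate how to carry out.
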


\begin{table}[ht]
\label{Table1}
\renewcommand{\arraystretch}{1.1}
\begin{center}
\begin{tabular}{|ccc|ccc|clc|}
\hline
 & Degree & & & Group $G$  & & & Comments &  \\
\hline
 & $p \ge 2$ & & & $\C_p \le G \le \AGL(1,p)$ & & & $p$ prime & \\
 & $4$ & & & $\Sym(4)$ & & &  & \\
 & $n \ge 4$ & & & $\Alt(n)$ or $\Sym(n)$ & & & $n$ odd if $G = \Alt(n)$ & \\
 & $(q^{d}-1)/(q-1)$ & & & $\PGL(d,q) \le G \le \PGammaL(d,q) $ & & & $d \ge 2$, and $q$ a prime-power & \\
 & $11$ & & & $\PSL(2,11)$ or $\M_{11}$ & & &  & \\
 & $23$ & & & $\M_{23}$ & & &  & \\
\hline
\end{tabular}
${}$\\[+2pt] ${}$
\end{center}
\caption{Primitive groups with a regular cyclic subgroup}
\renewcommand{\arraystretch}{1.0}
\end{table}

${}$\vskip -36pt

\begin{table}[ht]
\label{Table2}
\renewcommand{\arraystretch}{1.1}
\begin{center}
\begin{tabular}{|ccc|ccc|ccc|}
\hline
 & Degree & & & Group $G$  & & & Subgroup $N$ &  \\
\hline
 & $15$ & & & $\Alt(5) \times \C_3$ or $(\Alt(5) \times \C_3).\C_2$ & & & $\Alt(5)$ & \\
 & $22$ & & & $\M_{11} \times \C_2$ & & & $\M_{11}$ & \\
 & $r(q^{d}-1)/(q-1)$ & & & $(\PSL(d,q) \times \C_r).\C_k$ for certain $k$ & & & $\PSL(d,q)$ & \\
\hline
\end{tabular}
${}$\\[+6pt] ${}$
\end{center}
\caption{Imprimitive groups with a regular cyclic subgroup and a transitive minimal normal subgroup}
\end{table}

\subsection{The non-core-free case}

Here we deal with the case when $B$ is not core-free in $G = BY$.

\begin{proposition}
\label{prop:notcorefree}
Let $G$ be a group with subgroups $B$ and $Y$ such that $G=BY$,
where $B$ is monolithic, and $Y$ is cyclic and core-free in $G$.
Also let $A$ be the monolith of $B,$ and let $Z$ be the centraliser of $A$ in $Y$.
If $B$ is not core-free in $G$, then $A$ is normal in $G,$ and either $B=A$ and $Z=\{1\}$,
or $|Z|< |B/A|$.
\end{proposition}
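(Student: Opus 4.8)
Since $B$ is not core-free in $G$, Corollary~\ref{soceqsoccor} already tells us that $A$ is normal in $G$, so what remains is to bound $Z = \C_Y(A)$. Throughout I will use that $B\cap Y = \{1\}$ (we are in the complementary-factorisation setting of Theorem~\ref{thm:main}) and that $A$, being a non-abelian minimal normal subgroup of $B$, is a direct product of non-abelian simple groups and hence has trivial centre.

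The plan is to apply Lucchini's theorem (Theorem~\ref{thm:Lucc}) inside an appropriate quotient of $G$. Let $R$ be the core of $AY$ in $G$. Since $A \le R \le AY$, Dedekind's modular law gives $R = AS$ where $S := R\cap Y$ is cyclic, with $R/A \cong S$; and $YR = AY$ because $A$ is normal in $G$ and $S \le Y$. Now $\bar G := G/R$ acts faithfully and transitively on the coset space $(G:AY)$, which has size $|G:AY| = |B:A|$, and the stabiliser of the point $AY \in (G:AY)$ is $AY/R = YR/R =: \bar Y \cong Y/S$. Thus $\bar Y$ is a cyclic, core-free subgroup of $\bar G$, and it is a proper subgroup precisely when $B \ne A$; so in that case Lucchini gives $|Y:S| = |\bar Y| < |B:A|$. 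If, in addition, $Z \cap S = \{1\}$, then $ZR/R \cong Z$ is a subgroup of $\bar Y$, whence $|Z| \le |\bar Y| < |B:A|$, which is the conclusion we want.

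The step I expect to be the main obstacle is exactly the claim that $Z \cap S = \{1\}$: \emph{a priori}, Lucchini only controls the quotient $Y/S$, and not $Z$ itself. The key observation is that $C := \C_G(A) \cap R$ is a \emph{cyclic} normal subgroup of $G$. It is normal as the intersection of the normal subgroups $\C_G(A)$ and $R$, and it is cyclic because $C \cap A \le \Z(A) = \{1\}$ forces $C$ to embed into $R/A \cong S$. Since every subgroup of a cyclic normal subgroup of $G$ is characteristic in it and therefore normal in $G$, the subgroup $C \cap Y$ is normal in $G$; as $Y$ is core-free, $C \cap Y = \{1\}$. Finally $Z \cap S = \C_Y(A) \cap R = \C_G(A)\cap R \cap Y = C \cap Y = \{1\}$, as needed.

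The same observation also disposes of the degenerate case $B = A$, where $AY = G$, so $R = G$, $S = Y$, and Lucchini is vacuous: here $C = \C_G(A)$ is itself cyclic and normal, so $Z = \C_Y(A) = C \cap Y = \{1\}$, which is the first alternative in the statement. What is left is routine: the order and index bookkeeping from $B\cap Y = \{1\}$ and Dedekind's law, the identifications $YR = AY$ and $ZR/R \cong Z$, and the standard facts that the core of $AY$ is the kernel of the action of $G$ on $(G:AY)$ and that a point-stabiliser of a faithful transitive action is core-free.
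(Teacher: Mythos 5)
Your proof is correct, and at its core it follows the same route as the paper's, just packaged upstairs in $G$ rather than in $G/A$: your $R$, the core of $AY$ in $G$, is precisely the preimage of the core $\overline{K}$ of $\overline{Y}$ in $\overline{G}=G/A$ that the paper works with, so your application of Theorem~\ref{thm:Lucc} to $AY/R$ inside $G/R$ coincides with the paper's application to $\overline{Y}/\overline{K}$ inside $\overline{G}/\overline{K}$, and your count $|G:AY|\le |B:A|$ is the paper's inequality $|\overline{G}|/|\overline{Y}|\le|\overline{B}|=|B/A|$. Where you genuinely deviate is in proving that $Z$ survives the quotient. The paper shows $\overline{Z}\cap\overline{K}=\{1\}$ by first proving $\overline{Z}$ is core-free in $\overline{G}$ (if $X\le Z$ and $AX$ is normal in $G$, then $X$ is characteristic in $AX\cong A\times X$, hence normal in $G$, hence trivial by core-freeness of $Y$), whereas you observe that $C=\C_G(A)\cap R$ is a cyclic normal subgroup of $G$, so $C\cap Y$ is normal in $G$ and therefore trivial. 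Since $ZA\cap R=A(Z\cap R)$ by Dedekind and $Z\cap A\le\Z(A)=\{1\}$, your claim $Z\cap R=\{1\}$ is equivalent to the paper's $\overline{Z}\cap\overline{K}=\{1\}$; both arguments rest on the same ingredients (core-freeness of $Y$, cyclicity, triviality of $\Z(A)$), and yours is arguably a touch slicker. Your handling of the degenerate case $AY=G$ likewise matches the paper's case $\overline{Y}=\overline{G}$.

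One caveat: you assume $B\cap Y=\{1\}$, which is not part of the statement of Proposition~\ref{prop:notcorefree} and cannot be deduced from Lemma~\ref{lemma:basic} here, since $B$ is not core-free. It does hold in every application (skew product groups and Theorem~\ref{thm:main}), and in any case your argument survives without it: $|G:AY|=|B:B\cap AY|\le|B:A|$ because $G=B(AY)$ and $A\le B\cap AY$; the subgroup $C$ is cyclic in any case, since $R/A$ is a quotient of the cyclic group $S$; and the case split should simply be $AY=G$ versus $AY<G$, with your degenerate-case argument giving $Z=\{1\}$ when $AY=G$, which yields the stated disjunction whether or not $B=A$.
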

\begin{proof}
First, by Corollary~\ref{soceqsoccor} we find that $A$ is normal in $G$. Let $\overline{X}$ denote $XA/A$ ($\cong X/(X \cap A)$) for every $X \le G$. Since $G=BY$, we have $\overline{G}=\overline{B}\hskip 2pt \overline{Y}$ and thus $|\overline{G}|\leq |\overline{B}| |\overline{Y}| $. Also because the centre of $A$ must be trivial, we have $A\cap Z=\{1\}$ and therefore $\overline{Z} \cong Z$.

We now show that $\overline{Z}$ is core-free in $\overline{G}$.  For suppose that $X$ is a subgroup of $Z = \C_Y(A)$ such that $\overline{X}$ is normal in $\overline{G}$. Then $AX$ is normal in $G$, and also $AX \cong A\times X$,
where $X$ is cyclic while $A$ is a direct product of non-abelian simple groups, so $X$ is characteristic
in $AX$ and therefore normal in $G$.  But $Y$ is core-free, and so it follows that $X=\{1\}$, and hence $\overline{X} = \{1\}$.

Next let $\overline{K}$ be the core of $\overline{Y}$ in $\overline{G}$.
Since $\overline{K}$ is cyclic, all its subgroups are characteristic and hence normal in $\overline{G}$.
In particular, $\overline{Z}\cap \overline{K}$ is normal in $\overline{G}$, but then since $\overline{Z}$ is
core-free in $\overline{G}$, we find that $\overline{Z}\cap \overline{K} =\{1\}$,
and therefore $|\overline{Z}||\overline{K}|\leq |\overline{Y}|$.

Finally, if $\overline{Y}=\overline{G}$ then $Z$ is central in $AY=G$, and as $Y$ is core-free in $G$
we have $Z=\{1\}$.
On the other hand, if $\overline{Y}<\overline{G}$ then $\overline{Y}/\overline{K}$ is a proper cyclic core-free
subgroup of $\overline{G}/\overline{K}$, and so  by Theorem~\ref{thm:Lucc} we have
$|\overline{Y}/\overline{K}| < |(\overline{G}/\overline{K})/(\overline{Y}/\overline{K})| = |\overline{G}|/|\overline{Y}|$,
and therefore

\smallskip\smallskip
${}$ \hskip 100 pt $|Z|=|\overline{Z}| \leq |\overline{Y}|/|\overline{K}| < |\overline{G}|/|\overline{Y}| \leq |\overline{B}| = |B/A|$.
\end{proof}
\vskip 0pt

\subsection{The core-free case}
Here we deal with the other case, where $B$ is core-free in $G = BY$.

\begin{proposition}
\label{newprop}
Let $G$ be a group with core-free subgroups $B$ and $Y$ such that $G=BY,$ where $B$ is monolithic with monolith $A$, and $Y$ is cyclic. Then $G$ has a unique minimal normal subgroup $N$, and this normal subgroup $N$ contains $A$.
\end{proposition}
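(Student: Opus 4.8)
The plan is to exploit the factorisation $G = BY$ together with the fact that both $B$ and $Y$ are core-free, so that $G$ itself has trivial soluble radical (any soluble normal subgroup intersects both factors in a soluble normal subgroup of each, and one can push this through Lemma~\ref{lemma:prop}), or more directly: a minimal normal subgroup of $G$ cannot lie inside the core-free $B$, and we want to show it nonetheless must contain $A$. First I would take an arbitrary minimal normal subgroup $N$ of $G$ and consider $B \cap N$, which is normal in $B$. Since $B$ is monolithic with monolith $A$, either $B \cap N = \{1\}$ or $A \le B \cap N$, i.e. $A \le N$. So the crux is to rule out $B \cap N = \{1\}$.

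Suppose $B \cap N = \{1\}$. Then $N$ acts on the coset space $(G\!:\!B)$, on which $B$ is the point stabiliser; since $B$ is core-free this action is faithful, and $G = BY = BN$ forces $N$ to be transitive, hence (as $B \cap N = \{1\}$) regular. Thus $|N| = |G:B| = |Y|$, and $N$ is a transitive — indeed regular — minimal normal subgroup of the permutation group $G$ acting on $|Y|$ points, which also contains the regular cyclic subgroup $Y$. At this point I would invoke Theorem~\ref{thm:LiPraeger}: either $G$ is primitive (so $N$ is the unique minimal normal subgroup and we must check it contains $A$, but here we are assuming $A \not\le N$, i.e. $A \cap N = \{1\}$, giving two distinct minimal normal subgroups $A$ and $N$ in the primitive case — impossible for a primitive group of this shape, since primitive groups have at most two minimal normal subgroups and if there are two they are regular and isomorphic, which contradicts $A \le B$ core-free meeting its own conjugates), or $G$ is imprimitive with transitive minimal normal subgroup $N$ and appears in Table~2. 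In the Table~2 cases one reads off $N$ explicitly ($\Alt(5)$, $\M_{11}$, or $\PSL(d,q)$) and checks directly that $G$ has a unique minimal normal subgroup (namely $N$) and that $B$, being monolithic and core-free with $G = BN$, cannot exist unless in fact $A \le N$ — or, more cleanly, one derives a contradiction with $B \cap N = \{1\}$ and $B$ monolithic by comparing orders and socles.

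Actually, a cleaner route for the core-free case: since $B$ is monolithic, $\soc(B) = A$, and since $B$ is core-free in $G$ with $G = BY$, the group $G$ has trivial soluble radical (a minimal normal subgroup of $G$ is either non-abelian or, if abelian, would by Lemma~\ref{lemma:prop} force a nontrivial normal intersection with the core-free $B$ or with the core-free cyclic $Y$ — the latter giving an abelian normal subgroup inside $Y$, hence cyclic and core, contradiction; so every minimal normal subgroup of $G$ is non-abelian). Then I would show $\soc(G)$ is contained in $B$: indeed $\soc(G) \cap Y$ is a normal subgroup of $G$ inside the core-free $Y$, so trivial; but I actually want $\soc(G) \le B$, which does not follow merely from $\soc(G) \cap Y = \{1\}$. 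So I'll instead argue that for any minimal normal subgroup $N$ with $B \cap N = \{1\}$ we get $N$ regular of order $|Y|$ as above, feed this into Theorem~\ref{thm:LiPraeger}, and in every resulting case verify $A \le N$ (equivalently, verify $B$ monolithic core-free with $G = BN$ forces the containment), which then also yields uniqueness: two minimal normal subgroups $N_1, N_2$ would both have to contain $A$ and hence intersect nontrivially, so $N_1 = N_2$.

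\textbf{Main obstacle.} The hard part is the case analysis forced by Theorem~\ref{thm:LiPraeger}: one must take the handful of families in Tables~1 and~2 — the symmetric and alternating groups, the projective groups $\PGL(d,q) \le G \le \PGammaL(d,q)$ and their imprimitive relatives $(\PSL(d,q) \times \C_r).\C_k$, and the sporadic cases $\PSL(2,11)$, $\M_{11}$, $\M_{23}$ — and in each check that a monolithic core-free complement $B$ to a cyclic $Y$ either does not exist or has its monolith $A$ inside $N$ and that the minimal normal subgroup is unique. The imprimitive Table~2 family is the delicate one, since there $N = \PSL(d,q)$ is transitive but not regular unless $d$, $q$ are small, and one must track the action carefully; I expect this to be where most of the real work lies, and it is presumably why the authors set up Lemma~\ref{lemma:prop} and Theorem~\ref{thm:Lucc} beforehand.
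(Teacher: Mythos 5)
There is a genuine gap at the first substantive step. After assuming $B\cap N=\{1\}$ you write that ``$G=BY=BN$ forces $N$ to be transitive'', but $BN=G$ is not a hypothesis and does not follow: $N$ is an arbitrary minimal normal subgroup of $G$, and $BN$ may well be a proper subgroup, in which case $N$ is intransitive on $(G\!:\!B)$ and neither part (a) nor part (c) of Theorem~\ref{thm:LiPraeger} applies (part (c) explicitly requires a \emph{transitive} minimal normal subgroup). So the reduction to Tables~1 and~2, on which everything else in your argument rests, breaks down precisely in the case that needs work. Two further problems: in the primitive branch you derive a contradiction from ``two distinct minimal normal subgroups $A$ and $N$'' of $G$, but $A$ is only minimal normal in $B$; since $B$ is core-free in $G$, $A$ is \emph{not} normal in $G$, so that contradiction is unavailable. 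And the case analysis you defer (``most of the real work'') is never carried out; in particular, when $B\cap N=\{1\}$ the subgroup $N$ may be elementary abelian, and your parenthetical claim that $G$ has trivial soluble radical via Lemma~\ref{lemma:prop} is not an argument --- ruling out an abelian $N$ is essentially the content of the proposition.

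For comparison, the paper's proof of this proposition is deliberately elementary and avoids the CFSG entirely. Assuming $A\cap N=\{1\}$ (equivalently $B\cap N=\{1\}$), it notes that $B\cap YN$ embeds in $Y/(Y\cap N)$ and hence is cyclic; by It{\^o}'s theorem $YN=(B\cap YN)Y$ is metabelian, so $N\cong \C_p^{\,n}$ is elementary abelian. Then either $\C_B(N)=\{1\}$, which is excluded because Lemma~\ref{lemma:prop} produces a cyclic $p$-subgroup of $B\le \GL(n,p)$ of order at least $p^{n-1}$, contradicting the exponent of a Sylow $p$-subgroup of $\GL(n,p)$ except in small cases handled directly; or $A\le \C_B(N)$, which is excluded by passing to $G/N$, where Corollary~\ref{soceqsoccor} and Lemma~\ref{lemma:basic} give an order count forcing $N\le Y$, against core-freeness of $Y$. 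The Li--Praeger machinery is reserved for Corollary~\ref{cor:final}, where the possibility $BN<G$ is confronted honestly (by induction and a block argument). If you want to rescue your route you must both handle $BN<G$ and treat the abelian-$N$ case, at which point you are effectively reconstructing the paper's harder argument anyway.
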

\begin{proof}
First, by Lemma~\ref{lemma:basic} we find that $B\cap Y=\{1\}$. Now let $N$ be any minimal normal subgroup of $G$ and let $\overline{X}$ denote $XN/N$ ($\cong X/(X \cap N)$), for every $X \le G$.  We will show that $N$ intersects $A$ non-trivially, and thus $N$ contains $A$ and is the unique minimal normal subgroup of $G$.

Suppose that $A\cap N=\{1\}$. Since $A$ is the unique minimal normal subgroup of $B$, this implies that $B\cap N=\{1\}$ and hence that $\overline{B}\cong B$. Moreover, as $B \cap N = B \cap YN \cap N$ this gives
$$B\cap YN \cong (B\cap YN)/(B\cap N) \cong (B\cap YN)N/N \leq YN/N\cong Y/(Y\cap N),$$
and so $B\cap YN$ is cyclic. Then by It{\^ o}'s Theorem~\cite{Ito} on products of abelian groups, $(B\cap YN)Y$ is metabelian, and therefore so is its subgroup $BY \cap YNY = BY \cap YN = G \cap YN = YN$. This implies that the minimal normal subgroup $N$ of $G$ is abelian, and hence is isomorphic to $\C_p^{\,n}$ for some prime $p$. Also $\C_B(N)$ is a normal subgroup of $B$, and because $A$ is the monolith of $B$, it follows that  either $\C_B(N)=\{1\}$ or $A\leq \C_B(N)$. We eliminate these two cases separately.

\medskip\noindent
\textbf{Case (a): $\C_B(N)=\{1\}$.}
\medskip

In this case, $B$ acts faithfully on $N$ and so is isomorphic to a subgroup of $\Aut(N)\cong \GL(n,p)$. Also $B\cap YN$ and $Y\cap N$ are cyclic, and so using Lemma~\ref{lemma:prop} we find that $B\cap YN$ has order $|B\cap YN| = |N|/|Y\cap N| \geq |N|/p = p^{n-1}$. Next, a Sylow $p$-subgroup of $\GL(n,p)$ has exponent $p^{\lceil \log_p n\rceil}$ (as shown in~\cite[Theorem~1]{Niven}, for example), and again since $B \cap YN$ is cyclic, it follows that $p^{n-1} \leq |B\cap YN|  \leq p^{\lceil \log_p n\rceil}$ and so $n-1\leq \lceil \log_p n\rceil$. This is a strong restriction, which holds only when $n\leq 2$ or $(n,p)=(3,2)$.

But if $n\leq 2$ then $n = 2$ and $p$ is odd since $B$ is insoluble, and then $[B,B]$ is isomorphic to an insoluble subgroup of $\SL(2,p)$, and hence must contain the unique involution in $\SL(2,p)$, namely $-I$, which contradicts the fact that $B$ has trivial centre.

Thus $(n,p)=(3,2)$, so $B$ is isomorphic to an insoluble subgroup of $\GL(3,2)$.  This is actually the second smallest simple group, of order $168$, and hence $B \cong \GL(3,2)$.  Also $|N| = p^n = 8$, and then since $B\cap N=\{1\}$ we find that $BN \cong N \rtimes B$ is isomorphic to the semi-direct product $\C_2^{\, 3} \rtimes \GL(3,2) \cong \AGL(3,2)$.
Moreover, by Lemma~\ref{lemma:prop} we find that $|N|=|BN\cap Y||B\cap N| = |BN\cap Y|$,
and so $BN\cap Y$ is cyclic of order $8$.  But $\AGL(3,2)$ has no element of order $8$, contradiction.

\medskip\noindent
\textbf{Case (b): $A\leq \C_B(N)$.}
\medskip

In this case, $AN$ is isomorphic to $A\times N$, and since $A$ is a direct product of non-abelian simple groups while $N$ is abelian, we see that $A$ is characteristic in $AN$.  But $A$ is not normal in $G$ (since $B$ is core-free in $G$), and so $AN$ cannot be normal in $G$, and therefore $\overline{A}$ is not normal in $\overline{G}$. Also $\overline{B}$ is monolithic with monolith $\overline{A}$, hence using Corollary~\ref{soceqsoccor} we find that $\overline{B}$ is core-free in $\overline{G}$. Now by Lemma~\ref{lemma:basic}, it follows that $\overline{B}\cap\overline{Y}=\{1\}$, so $|\overline{G}|=|\overline{B}||\overline{Y}|$, and therefore
$$|B||Y| = |G| = |\overline{G}||N| = |\overline{B}||\overline{Y}||N| = |B||YN/N||N| = |B||YN|.$$
This gives $|Y| =|NY|$, and so $N\leq Y$, which contradicts the fact that $Y$ is core-free in $G$.

\medskip

Thus $A\cap N\neq \{1\}$. But $A$ and $N$ are both normalised by $B$, so $A\cap N$ is normal in $B$,
and then since $A$ is the unique minimal normal subgroup of $B$, we find that $A\leq N$.
Hence every minimal normal subgroup of $G$ contains $A$.

Finally, any two minimal normal subgroups intersect trivially, and so there cannot be more than one,
and hence $G$ has a unique minimal normal subgroup, as required.
\end{proof}

\begin{corollary}
\label{cor:final}
Let $G$ be a group with core-free subgroups $B$ and $C$ such that $G=BC$. If $B$ is monolithic and $C$ is cyclic, then $G$ is almost simple.
\end{corollary}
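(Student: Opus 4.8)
The plan is to combine Proposition~\ref{newprop} with the Li--Praeger classification (Theorem~\ref{thm:LiPraeger}), arguing by induction on $|G|$. First I would apply Lemma~\ref{lemma:basic} to get $B\cap C=\{1\}$; then the action of $G$ on $(G:B)$ is faithful because $B$ is core-free, and $C$ acts regularly because $G=BC$ and $|C|=|G:B|$. So $G$ is a finite transitive permutation group with a regular cyclic subgroup, and Theorem~\ref{thm:LiPraeger} is available. By Proposition~\ref{newprop}, $G$ has a unique minimal normal subgroup $N$, and $A\le N$, so $N$ is non-abelian; hence $\C_G(N)$ is a normal subgroup of $G$ meeting $N$ trivially (as $\Z(N)=\{1\}$) and therefore trivial, giving $N=\soc(G)$ and $N\le G\le\Aut(N)$. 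Thus $G$ is almost simple as soon as $N$ is simple, and this is what I would prove.

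If $N$ is transitive on $(G:B)$, then every non-trivial normal subgroup of $G$ contains $N$ and so is transitive, i.e.\ $G$ is quasiprimitive; by Theorem~\ref{thm:LiPraeger}(a), $G$ is then primitive and appears in Table~1. Lines~1 and~2 are excluded, since there $\soc(G)$ (cyclic of prime order, and the Klein four-group respectively) is abelian whereas $\soc(G)=N$ is not; so $G$ appears in one of lines~3--6, and Theorem~\ref{thm:LiPraeger}(b) yields that $G$ is almost simple, with $N$ simple. This is the base case.

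The substantive case is when $N$ is intransitive. Then the $N$-orbits form a $G$-invariant block system $\mathcal B$ of size $m\ge 2$; let $\Delta$ be the block containing the point $B$. I would first check that the setwise stabiliser of $\Delta$ in $G$ is $NB$, that $C_1:=C\cap NB$ is the unique subgroup of $C$ of index $m$ and acts regularly on $\Delta$, and hence (comparing orders) that $NB=BC_1$; moreover $C_1$ is core-free in $NB$, since its core in $NB$ is cyclic, hence characteristic in $C_1$ and normalised by $C$, and also normal in $NB$, so normal in $\langle NB,C\rangle=G$, and therefore trivial. Now there are two sub-cases. If $B$ is core-free in $NB$, then $(NB,B,C_1)$ satisfies the hypotheses of the corollary with $|NB|<|G|$, so by induction $NB$ is almost simple; its socle $R$ lies in $N$ (as $N$ is a non-trivial normal subgroup of $NB$ and $R$ is its unique minimal normal subgroup), $\C_N(R)=\{1\}$ (as $\C_{NB}(R)=\{1\}$), and $R$ embeds into some simple direct factor of $N\cong T^k$ (one of the $k$ coordinate projections of $N$ restricts to an injection on $R$), so $|T|^k=|N|\le|\Aut(R)|<|R|^2\le|T|^2$ (using the well-known bound $|\Out(R)|<|R|$), forcing $k=1$ and hence $N$ simple. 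If instead $B$ is not core-free in $NB$, then by Corollary~\ref{soceqsoccor} the monolith $A$ is normal in $NB$, hence in $N$, so $A=\prod_{i\in I}T_i$ for some $I\subsetneq\{1,\dots,k\}$ (the containment is proper, since otherwise $A=N$ would be normal in $G$, contradicting core-freeness of $B$); then the complementary product $A^*=\prod_{i\notin I}T_i$ is also normal in $NB$, with $A^*\cap B=\{1\}$ (otherwise $A^*\cap B$ would be a non-trivial normal subgroup of $B$ and hence contain $\soc(B)=A$, which is impossible as $A\cap A^*=\{1\}$). Then $NB=A^*\rtimes B$, and in the action on $\Delta$ the subgroup $A^*$ is a regular normal subgroup isomorphic to $T^{k-|I|}$ while $C_1$ is a regular cyclic subgroup of the same order; but a group with a regular normal subgroup isomorphic to $T^r$ embeds into $\mathrm{Hol}(T^r)=T^r\rtimes\Aut(T^r)$, which has no element of order $|T|^r$ (its exponent being far below $|T|^r$, by the known orders of elements in finite simple groups and their automorphism groups), a contradiction. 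So this sub-case does not arise, and the induction is complete.

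The step I expect to be the main obstacle is this intransitive case: verifying cleanly that passing to the block stabiliser really does produce a smaller instance of the corollary, so that the induction can run, and then eliminating the sub-case in which $B$ fails to be core-free in the block stabiliser --- which requires both Corollary~\ref{soceqsoccor} and the fact that powers of non-abelian simple groups (and their holomorphs) contain no cyclic subgroup of full order.
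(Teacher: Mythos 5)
Your skeleton matches the paper's proof closely: Lemma~\ref{lemma:basic}, the faithful action on $(G\!:\!B)$ with $C$ regular, Proposition~\ref{newprop}, the quasiprimitive case handled via Table~1, and an induction applied to the block stabiliser $NB=B C_1$ when $N$ is intransitive. Your treatment of the sub-case where $B$ is core-free in $NB$ is sound (and your core-freeness argument for $C_1$, and the $|N|\le|\Aut(R)|<|R|^2\le|T|^2$ count, are legitimate variants of the paper's shorter route via $\soc(NB)=N$ from Lemma~\ref{soceqsoc}).

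The genuine gap is in the elimination of the last sub-case ($B$ not core-free in $NB$). Everything up to ``$(NB)^{\Delta}$ has a regular normal subgroup isomorphic to $T^{r}$ and a regular cyclic subgroup of order $|T|^{r}$'' is fine (modulo the easy missing step $N\cap B=A$, which does follow from $A^*\cap B=\{1\}$). But the contradiction you then invoke --- that $\mathrm{Hol}(T^{r})=T^{r}\rtimes\Aut(T^{r})$ has no element of order $|T|^{r}$ --- is asserted, not proved, and the stated justification is wrong as written: the \emph{exponent} of $\mathrm{Hol}(T^r)$ is not ``far below'' $|T|^r$ in general (already for $r=1$ and $T=\Alt(5)$ the group $\Alt(5)\rtimes\Sym(5)$ contains elements of orders $3$, $4$ and $5$, so its exponent is at least $60=|T|$). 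What you actually need is a bound on the \emph{maximum element order}, e.g.\ that the maximum element order of $T^{r}$ times that of $\Aut(T^{r})$ is less than $|T|^{r}$; for $r=1$ this reads ``(max element order of $T$)$\cdot$(max element order of $\Aut(T)$)$<|T|$'', which is true but is a CFSG-level statement requiring a case analysis over the simple groups or a citation --- it is essentially the content that the paper outsources at exactly this point by quoting \cite[Theorem 1.3(3)]{LiPraeger}, which both forces the regular normal subgroup induced on $\Delta$ to be simple and restricts $G_{\Delta}^{\,\Delta}$ to the tables, where the degree is smaller than the order of any non-abelian simple normal subgroup. So either supply a proof (or reference) for the element-order bound in holomorphs of powers of simple groups, or replace this step by the appeal to Li--Praeger as in the paper; as it stands the final sub-case is not eliminated.
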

\begin{proof}
We use induction on $|G|$.  By Lemma~\ref{lemma:basic}, once again we have $B\cap C=\{1\}$. Now let $A$ be the socle of $B$. Then $A$ is not normal in $G$ because $B$ is core-free in $G$, and by Proposition~\ref{newprop} we know that $G$ has a unique minimal normal subgroup $N$, and $A\leq N$. In particular, $N = \soc(G)$, and because $A$ is not soluble, $N\cong T^n$ for some non-abelian simple group $T$ and some positive integer $n\geq 1$. Also by Lemma~\ref{soceqsoc} with $H = BN$ we have  $\soc(BN)=\soc(G)=N$.

Next, since $B$ is core-free in $G$, we may view $G$ as a permutation group on the coset space $(G\!:\!B)$ with point-stabiliser $B,$ and a regular cyclic subgroup $C$. We proceed by considering three different cases, where $BN = G$, or $BN < G$ and $B$ is core-free in $BN$, or $BN < G$ and $B$ is not core-free in $BN$.

\medskip\noindent
\textbf{Case (a)}: $BN = G$
\medskip

In this case, $N$ is a transitive subgroup of $G$, and then since $N$ is the only minimal normal subgroup of $G$, it follows that every non-trivial normal subgroup is transitive, and so by definition $G$ is \emph{quasiprimitive} on $(G\!:\!B)$. Hence $G$ appears in Table~\ref{Table1} of Theorem~\ref{thm:LiPraeger}. But also $G$ is not soluble, and hence we can rule out lines $1$ and $2$ of the table, and conclude that $G$ is almost simple.

\medskip\noindent
\textbf{Case (b)}: $BN < G$ and $B$ is core-free in $BN$
\medskip

In this case, first we note that $BN$ has trivial soluble radical, since $\soc(BN)=N\cong T^n$ where $T$ is non-abelian simple, and hence the cyclic subgroup $BN\cap C$ is core-free in $BN$. Next, by Dedekind's Modular Law, $B(BN\cap C)  = BN \cap BC = BN \cap G = BN$.  We may now apply the inductive hypothesis to $BN$ (with $BN\cap C$ playing the role of $C$)   to conclude that $BN$ is almost simple, and then since $\soc(BN)=\soc(G)=N,$ it follows that also $G$ is almost simple.

\medskip\noindent
\textbf{Case (c)}: $BN < G$ and $B$ is not core-free in $BN$
\medskip

In this case, let $K$ be the core of $B$ in $BN$. Then by hypothesis $K$ is a non-trivial normal subgroup of $B$, so must contain $A$, and then by Lemma~\ref{soceqsoc} applied to $A = \soc(B) \le K \le B$, we find that $\soc(K) = \soc(B) = A$. In particular, $A$ is characteristic in $K$ and hence normal in $BN$, so $A$ is also normal in $N$.  Hence $A$ must be a product of some of the (simple) direct factors of $N$. It then follows that every subgroup of $N$ containing $A$ must be of the form $A \times H$ for some $H \le N$,  and in particular, $B \cap N$ has this form. But also $A$ is the unique minimal subgroup of $B$, so the  centraliser of $A$ in $B$ is trivial, and so $H$ is trivial, and $B \cap N = A$.

Again viewing $G$ as a permutation group on the coset space $(G\!:\!B)$ with point-stabiliser $G_v = B$, with regular cyclic subgroup $C$, we now have $A = B \cap N = N_v$.  Since this is normal in $N$, it follows that $N$ acts regularly on each of its orbits.

Let $\Delta$ be an $N$-orbit on $(G\!:\!B)$, and let $G_{\Delta}$ denote the set-wise stabiliser of $\Delta$ in $G$. Then since $N$ is normal in $G$ and preserves $\Delta$, we know that also $N$ is normal in $G_{\Delta}$, and hence $N^{\Delta}$ is normal in  $G_{\Delta}^{\,\Delta}$.
By~\cite[Theorem 1.3(3)]{LiPraeger} it now follows that $G_{\Delta}^{\,\Delta}$ appears either in lines 3 to 6 of Table 1 or in lines 1 to 3 of Table 2 of Theorem~\ref{thm:LiPraeger}, and that $N^{\Delta}$ is a simple subgroup of $G_{\Delta}^{\,\Delta}$ acting regularly on $\Delta$. This is not possible, however, because for each choice of $G_{\Delta}^{\,\Delta}$ from those two tables, the degree of the permutation group is strictly smaller than the order of its smallest non-abelian simple normal subgroup.
Hence this third case is eliminated.
\end{proof}

We can now complete the proof of Theorem~\ref{thm:main}, below.

\begin{proof}
Let $G$ be a group with core-free subgroups $B$ and $Y$ such that $G=BY$, where $B$ is monolithic and $Y$ is cyclic. By Lemma~\ref{lemma:basic}, we know that $B\cap Y=\{1\}$, and Corollary~\ref{cor:final} implies that  $G$ is almost simple. Once again, we may view $G$ as a permutation group on the coset space $(G\!:\!B)$ with point-stabiliser $B$, and a regular cyclic subgroup $Y$. It follows that $G$ appears in lines 3 to 6 of Table 1 in Theorem~\ref{thm:LiPraeger}. But also we can exclude line $4$ of the table, because in that case the point-stabiliser  contains an elementary abelian normal subgroup of order $q^{d-1}$, and hence is not monolithic. The other possibilities are genuine examples, and give the cases listed in the statement of Theorem~\ref{thm:main}.
\end{proof}

\section{Examples and remarks concerning Theorem~\ref{thm:main}}
\label{sec:remarks}

Our first example in this section gives an infinite family for which the upper bound on the order of $Z = \C_Y(A)$ in Theorem~\ref{thm:main} is sharp.

\begin{example}
\label{example:sharp}
Let $p$ be a prime, let $G = \PSigmaL(2,2^p)\times \AGL(1,p)$ and write $\PSigmaL(2,2^p)=A\rtimes C\cong \PSL(2,2^p)\rtimes \C_p$ and $\AGL(1,p)=T\rtimes M\cong \C_p \rtimes \C_{p-1}$. Let $D$ be the diagonal subgroup of $C\times T\cong\C_p \times \C_p$, let $B=\langle A,D\rangle$ and let $Y=C\times M\cong \C_p \times \C_{p-1}$.
Then $B$ is isomorphic to $\PSigmaL(2,2^p)$, and hence is almost simple with socle $A$, which is normal in $G$.  Also $Y$ is cyclic, $B\cap Y=\{1\}$ and, by order considerations, $G=BY$. Note also that $Y$ is core-free in $G$. Thus $B$, $Y$ and $G$ satisfy the hypotheses of Theorem~\ref{thm:main}. Finally, we see that the centraliser of $A$ in $Y$ is $M\cong\C_{p-1}$, while $|B/A|=|C|=p$.
\end{example}

Taking $p \ge 3$ in Example~\ref{example:sharp} also provides examples for which $B$ is neither core-free nor normal in $G$.

\smallskip
The next example shows that although the order of $Z = \C_Y(A)$ is always less than $|B/A|$, the order of the centraliser of $A$ in $G$ can be arbitrarily large, even when $B=A$.

\begin{example}
\label{example:sharp2}
For $n\geq 5$, take $A = B = \Alt(n)$, let $g$ be an element of odd order $m$ in $A$, and let $G$ be the  direct product $A \times \C_m$.  Also let $Y$ be the cyclic subgroup of $G$ of order $m$ generated by $gc$,  where $c$ is a generator of the direct factor $\C_m$.  Then clearly $G = BY$ with $B \cap Y = \{1\}$, and $B$ is simple  and normal in $G$, while $Y$ is cyclic and core-free in $G$, so again $B$, $Y$ and $G$ satisfy the hypotheses  of Theorem~\ref{thm:main}.  Here $Z = \C_Y(A)$ is trivial, while $\C_G(A)$ has order $m$, which can be arbitrarily large (depending on the choice of $n$ and $g$).
\end{example}

Finally, we give an example where $B$ is neither core-free nor normal in $G$, and $A$ is not simple.

\begin{example}
\label{example:monolithic}
Let $H$ be a permutation group of degree $n$ with a transitive non-normal subgroup $X$ and a core-free cyclic subgroup $Y$ such that $H=XY$ with $X \cap Y = \{1\}$.

Note that there are many possibilities for $(H,X,Y)$. The smallest example in terms of  $|H|$  is obtained by taking $H=\Sym(3) \times \C_3 \cong (\C_3 \rtimes \C_2) \times \C_3$ with $X \cong \C_2 \times \C_3 \cong \C_6$  and $Y$ the diagonal subgroup of $\C_3 \times \C_3$, and $n = 6$. The smallest example in terms of the degree $n$ is obtained by taking $H=\Sym(4)$, with $X=\D_4$ and $Y \cong \C_3$, and $n=4$.

Next, let $T$ be a non-abelian simple group, let $G$ be the wreath product $T \wr H$ (which is isomorphic to $T^n \rtimes H$), and let $B =T \wr X$, and consider $Y$ as a subgroup of $\{1\} \wr H$ in $G$.  Then $A = T^n$ is a minimal normal subgroup of $B$, since $X$ acts transitively on the $n$ copies of $T$,  and in fact $T^n$ is the only minimal normal subgroup of $B$, because any other minimal normal subgroup of $B$  would be contained in the centraliser of $T^n$ in $B$, which is trivial. It follows that $B$ is monolithic, with monolith $A$. Also $G=BY$ with $B \cap Y = \{1\}$, and $B$ not normal in $G$, while $Y$ is cyclic and core-free in $G$.
\end{example}

We complete this section with a remark that includes an alternative proof of Corollary~\ref{cor:final}  in the case where $B$ is simple.

\begin{remark}
The proofs of the preliminaries in Section \ref{prelims} and Propositions~\ref{prop:notcorefree} and~\ref{newprop} are elementary, in the sense of not relying on the CFSG (the classification of finite simple groups). On the other hand, the proof of Corollary~\ref{cor:final} relies on the CFSG many times, when we cite Theorem~\ref{thm:LiPraeger}.  This, however, can be avoided in the case when $B$ is simple.  We present a short proof of this fact, as we believe it could be of some interest.

\smallskip
\begin{proof}
Let $G$ be a group with core-free subgroups $B$ and $Y$ such that $G=BY$, where $B$ is non-abelian simple, and $Y$ is cyclic.  Then $\soc(B)=B$ since $B$ is simple, and by Proposition~\ref{newprop} we know that $G$ has a unique minimal normal subgroup, say $N,$ containing $B$. Also $N \cong T^k$ for some simple group $T$, and since $B\leq N$, we have $YN=G$, and therefore $Y$ conjugates transitively the $n$ direct factors of $N$  among themselves.  Also because $Y$ is abelian, it is an easy exercise to show that $Y \cap N$ is isomorphic to a subgroup of $T$.  Hence by Theorem~\ref{thm:Lucc}, we find that $|Y \cap N| < |T:(Y \cap N)|$ and  therefore $|Y \cap N| < |T|^{1/2}$. Next, Lemma~\ref{lemma:prop} gives  $|N|=|B \cap YN||Y \cap N| = |B||Y \cap N|$,  and since $B\leq N \cong T^k$ and $B$ is simple, it can be easily verified that $B$ is isomorphic to a subgroup of $T$, and therefore $|B| \leq |T|$. Consequently $|T|^k = |N| = |B||Y \cap N| < |B||T|^{1/2} \leq |T|^{3/2}$, which gives $k<\frac{3}{2}$. It follows that $k=1$, and therefore $N$ is simple, and $G$ is almost simple.   
\end{proof}

\smallskip\smallskip
Finally, note that the last part of our proof of Theorem~\ref{thm:main} (after Corollary~\ref{cor:final}) again relies  heavily on the CFSG through the use of Theorem~\ref{thm:LiPraeger}, even in the case where $B$ is simple.
\end{remark}

\section{Skew morphisms of non-abelian simple and other monolithic groups}
\label{sec:skew}
In this section, we first describe a method for determining the skew morphisms of a finite group $B$ using skew product groups. Using Theorem \ref{thm:main}, we then apply this method to the case where $B$ is monolithic and core-free in its skew product group. We complete the paper by giving a summary of information about the case where $B$ is simple, and making some observations in the case where $B$ is not core-free.

\subsection{Determining skew morphisms using skew product groups}\label{sub:determining skews}
We first define a few notions that will be very useful. If $\varphi$ is a skew morphism of a group $B$ and $\psi$ is a group automorphism of $B$, then $\psi \circ \varphi \circ \psi^{-1}$ is also a skew morphism of $B$ (see \cite{BachratyJajcay} for example) that is said to be \emph{equivalent} to $\varphi$. Now let $G$ be a skew product group for $B$. By definition, $G$ contains a copy of $B$ and an element $y$ such that $G=B\langle y\rangle$ is a complementary factorisation and $\langle y \rangle$ is core-free in $G$. We call such a  pair $(B,y)$ a \emph{skew generating pair} for $G$, and we say that two skew generating pairs are \emph{equivalent} if they are conjugate under $\Aut(G)$. It is easy to see that equivalent skew generating pairs induce equivalent skew morphisms.

To enumerate all skew morphisms of $B$, we first determine all equivalence classes and then determine the size of each class.   To determine the classes, we first determine (up to isomorphism) all skew product groups $G$ for $B$. Note that if we want only proper skew morphisms of $B$, then by Lemma~\ref{lem:kernel} we can restrict ourselves to the skew products groups in which $B$ is not normal. Then for each such $G$, we determine the equivalence classes of generating pairs, choose a representative from each class, and take the corresponding skew morphisms. By the previous paragraph, we now have at least one representative of each equivalence class of skew morphisms. It remains to check if any of these actually represent the same class, which can be done by a direct check of conjugacy under $\Aut(B)$ in $\Sym(B)$. Finally, the size of the equivalence class of a given skew morphism $\varphi$ is given by the index $|\Aut(B) \!:\! \C_{\Aut(B)}(\varphi)|$.

The method above is designed to minimise the amount of calculations that have to be undertaken in  $\Sym(B)$, noting that this is often a very large group compared to $\Aut(B)$ or even $\Aut(G)$.

\begin{remark}
\label{rem:skewgenpairs}
If $(B,y)$ is a generating pair for a skew product group $G$, then no two different elements of $\langle y\rangle$ induce the same skew morphism of $B$. For if $y$ and $y'$ are elements of $Y$ that induce the same skew morphism $\varphi$ of $B$, with power function $\pi$, then for every $b \in B$ we have $yb = \varphi(b)y^{\pi(b)}$ and $y'b = \varphi(b)(y')^{\pi(b)},$  and so $b^{-1}y^{-1}y'b = (yb)^{-1}y'b = (\varphi(b)y^{\pi(b)})^{-1}(\varphi(b)(y')^{\pi(b)}) = y^{-\pi(b)}(y')^{\pi(b)} = (y^{-1}y')^{\pi(b)}$.   This implies that the cyclic subgroup of $\langle y\rangle$ generated by $y^{-1}y'$  is normalised by $b$, for all $b \in B$, and hence is normal in $B\langle y\rangle = G$.  But $\langle y\rangle$ is core-free in $G$,  so $y^{-1}y'$ must be trivial, and therefore $y' = y$.
\end{remark}

Next, we note the following, obtainable from the proof of \cite[Theorem~2]{JajcaySiran}:

\begin{proposition}
\label{prop:mapskews}
A skew morphism $\varphi$ of a finite group $B$ gives rise to a regular Cayley map  for $B$ if and only if the set of elements in some cycle of $\varphi$  \emph{(}when regarded as a permutation of $B)$ is closed under taking inverses and generates $B$.
\end{proposition}

Finally we note that every regular Cayley map for the group $B$ that is \emph{balanced} (in the sense defined in~\cite{JajcaySiran}) comes from an automorphism of $B$, and so proper skew morphisms give rise only to non-balanced regular Cayley maps.

\subsection{Skew morphisms of monolithic groups: the core-free case}\label{sub:corefree}
Here we apply the method from the previous subsection to determine all skew morphisms of a monolithic group $B$ 
when $B$ is core-free  in the skew product group $G=BY$.  All such skew morphisms will be proper, since the hypothesis requires  that $B$ itself is not normal in $G$.

By Theorem~\ref{thm:main}, the only possibilities for $B$ are $\Alt(5)$, $\M_{10}$, $\M_{22}$,  $\Alt(n)$ for even $n\geq 6$, and $\Sym(n)$ for $n\geq 5$.  We enumerate both the number of (proper) skew morphisms, and the number of equivalence classes of these.  This enumeration splits into seven cases, coming from the five cases in Theorem~\ref{thm:main},  with the two additional cases $\Alt(6)$ and $\Sym(6)$ treated separately because of their exceptional  outer automorphisms.  In particular, we have five sporadic cases (for which we rely heavily on {\sc Magma} \cite{Magma}) and two infinite families.


\medskip\smallskip\noindent
\textbf{Case 1: $B=\Alt(5)$}
\medskip

Here the skew product group $G$ is $\PSL(2,11)$, with $B=\Alt(5)$ core-free in $G$,
and $Y$ cyclic of order $11$. 
There are two conjugacy classes of subgroups isomorphic to $B$ in $G$, but these form a single class within $\Aut(G)$, so we may take $B$ as any representative of just one of them. Then the subgroup $H$ of $\Aut(G)$ preserving $B$ has two orbits on elements of order $11$ in $G$, with $y$ and $y^{-1}$ always lying in different orbits, for every such $y$. Hence we have two equivalence classes of skew generating pairs for $G$.

Next, take any $y$ of order $11$ in $G$, and let $\varphi$ be the skew morphism of $B$ induced by $y$.  Then every proper skew morphism of $B=\Alt(5)$ is equivalent to $\varphi$ or $\varphi^{-1}$.  Moreover, the centraliser in $\Aut(B) \cong \Sym(5)$ of $\varphi$ is trivial, and $\varphi^{-1}$ is not a conjugate of $\varphi$ under an element of $\Aut(B)$,  so these two skew morphisms are inequivalent.  Hence there are two equivalence classes of proper skew morphisms of $\Alt(5)$, each of size $120$.

Moreover, it is easy to check that $\varphi$ induces at least one $11$-cycle on $\Alt(5)$
that contains three involutions, four $3$-cycles and four $5$-cycles, forming a set that is closed
under inverses and clearly generates $\Alt(5)$.  It follows that the same holds for $\varphi^{-1}$,
and hence for all skew morphisms (each of which is equivalent to $\varphi$ or $\varphi^{-1}$).
Hence by Proposition~\ref{prop:mapskews}, every proper skew morphism of $\Alt(5)$
gives rise to a (non-balanced) regular Cayley map.

In conclusion, $\Alt(5)$ has $240$ proper skew morphisms, which split into two equivalence classes
of size $120$, with one of the classes consisting of the inverses of the members of the other one,
and also every proper skew morphism gives rise to a (non-balanced) regular Cayley map.

\medskip\smallskip\noindent
\textbf{Case 2:} $B = \M_{10}$
\medskip

In this case the skew product group $G$ is $\M_{11}$, with $B = \M_{10}$ 
and $Y$ cyclic of order $11$. 
Again here we have two classes of skew generating pairs, and the skew morphisms associated
with their representatives are not equivalent. We find there are two equivalence classes of proper skew morphisms,
each of size $|\Aut(\M_{10})| = 1440$, giving us a total of $2880$ proper skew morphisms.
Also one of the classes consists of the inverses of the members of the other one.
Finally,  each of the two class representatives (and hence every proper skew morphism) induces at least one
$11$-cycle on $\M_{10}$ that consists of three involutions, six elements of order $4$ and two of order $8$,
forming a generating set that is closed under inverses, and hence every one
of the proper skew morphisms of $\M_{10}$ gives rise to a (non-balanced) regular Cayley map.

\medskip\smallskip\noindent
\textbf{Case 3: $B = \M_{22}$}
\medskip

The skew product group $G$ in this case is $\M_{23}$, with $B = \M_{22}$ 
and $Y$ cyclic of order $23$. 
Again there are two classes of skew generating pairs, and the skew morphisms associated with
representatives of different classes are not equivalent.  Hence we have two equivalence classes of
proper skew morphisms, each of size $|\Aut(\M_{12})| = 887040$, giving us a total of $1774080$.
Again in this case, each class consists of the inverses of the members of the other one.
Finally,  each of the two class representatives (and hence every proper skew morphism) induces at least one
$23$-cycle on $\M_{22}$ that consists of seven involutions, eight elements of order $7$ and eight of order $11$,
forming a generating set that is closed under inverses, and hence every one
of the proper skew morphisms of $\M_{22}$ gives rise to a (non-balanced) regular Cayley map.

\medskip\smallskip\noindent
\textbf{Case 4: $B = \Alt(6)$}
\medskip

Here the skew product group $G$ is $\Alt(7)$, with $B = \Alt(6)$ 
and $Y$ cyclic of order $7$. 
This time there is only one equivalence class of skew generating pairs, giving us a single class of proper skew morphisms, of size $|\Aut(\Alt(6))|=1440$. Moreover, each proper skew morphism induces at least one $7$-cycle on $\Alt(6)$ that contains three involutions and four elements of order $5$, forming a generating set that is closed under inverses, and hence every proper skew morphism of $\Alt(6)$ gives rise to a (non-balanced) regular Cayley map.

\medskip\smallskip\noindent
\textbf{Case 5: $B = \Sym(6)$}
\medskip

The skew product group $G$ in this case is $\Sym(7)$, with $B = \Sym(6)$ 
and $Y$ cyclic of order~$7$.  Just as in the previous case, there is only one equivalence class of skew generating pairs, giving a single class of proper skew morphisms, of size $|\Aut(\Sym(6))|=1440$.
Moreover, a representative of that class induces at least one $7$-cycle on $\Sym(6)$ that contains five involutions and two elements of order $6$ which form a generating set that is closed under inverses, and hence every proper skew morphism of $\Sym(6)$ gives rise to a (non-balanced) regular Cayley map.

\medskip\smallskip\noindent
\textbf{Case 6:} $B = \Alt(n)$ for even $n \geq 8$
\medskip

Here the skew product group $G$ is $\Alt(n+1)$, with $B = \Alt(n)$ 
and $Y$ cyclic of order $n+1$.  
Up to equivalence, there is only one skew generating pair, 
and hence a single equivalence class of proper skew morphisms of $\Alt(n)$, of size at most $|\Aut(\Alt(n))|=n!$.

Now take $B = \Alt(n)$ as the stabiliser of the point $n+1$ in $G$, let $y = (1,2,3,\dots,n+1)$, and let $\varphi$ be the skew morphism of $\Alt(n)$ induced by $y$. If $\pi$ is the power function of $\varphi$, then by considering the effect  of $\,y b=\varphi(b)y^{\pi(b)} \in G = \Alt(n+1)\,$ on the point $n+1$, we find that
$$1^b = ((n+1)^{y})^b = (n+1)^{y b} = (n+1)^{\varphi(b)y^{\pi(b)}} = (n+1)^{y^{\pi(b)}}  \ \hbox{ for all } b \in B, $$
because $\varphi(b) \in B$ fixes the point $n+1$. Hence if $\pi(b)=k$ then $1^b = (n+1)^{y_1^{\,k}} = k$, and therefore $\pi(b) = 1^b$ for all $b \in B$.

Next, suppose $z$ is another $(n+1)$-cycle in $G = \Alt(n+1)$ that gives exactly the same skew morphism $\varphi$ of $\Alt(n)$, say $z = (p_1,p_2,p_3,\dots,p_n,n+1)$. Then the same argument applies also to $\,z b=\varphi(b)z^{\pi(b)}$, giving $((n+1)^{z})^b = (n+1)^{z^{\pi(b)}}$ for all $b \in B$, so $p_1^{\, b} = ((n+1)^{z})^b = (n+1)^{z^{\,k}} = p_k$ whenever  $\pi(b)=k$.

In particular, if we let $a$ be the $(n-1)$-cycle $(1,2,3,\dots,n-1)$ in $B = \Alt(n)$, then taking $b = a^j$ we find that $\pi(a^j) = 1^{a^j} = j+1$ and so $p_1^{\,a^j} = p_{j+1}$ for $1 \le k \le n-1$.  It follows that $a$ takes $p_i$ to $p_{i+1}$ for $1 \le i \le n-1$, and so has the same effect as $z$ on $\{p_1,p_2,\dots,p_{n-1}\}$, and therefore $z = (1,2,3,\dots,n,n+1) = y$.

Hence for a given copy of  $B = \Alt(n)$ in $G = \Alt(n+1)$, every two different $(n+1)$-cycles give two different skew morphisms of $B$. Since the number of $(n+1)$-cycles is $n!$, it follows that the number of proper skew morphisms of $\Alt(n)$ is exactly $n!$.  In particular, the centraliser in $\Aut(B) \cong \Sym(n)$ of $\varphi$ is trivial.

Also the set $S$ of elements of the cycle of $\varphi$ on $\Alt(n)$ containing the double transposition $x = (1,2)(3,4)$ consists of the following elements:
\\[-10pt]
\begin{center}
\begin{tabular}{rcl}
$x$ & \hskip -5pt $=$ &  \hskip -4pt $(1,2)(3,4)$, \\[+2pt]
$yxy^{-2} $ & \hskip -5pt $=$ &  \hskip -4pt $(1, n, n-1, n-2, \dots, 5, 4, 3)$, \\[+2pt]
$y^{2}xy^{-1} $ & \hskip -5pt $=$ &  \hskip -4pt $(1, 3, 4, 5, \dots, n-2, n-1, n)$, \\[+2pt]
$y^{3}xy^{-4} $ & \hskip -5pt $=$ &  \hskip -4pt $(1, n, n-2, n-3, \dots, 4, 3, 2)$, \\[+2pt]
$y^{4}xy^{-3} $ & \hskip -5pt $=$ &  \hskip -4pt $(1, 2, 3, 4, \dots, n-3, n-2, n)$, \\[+2pt]
$y^{5}xy^{-5} $ & \hskip -5pt $=$ &  \hskip -4pt $(n-3, n-2)(n-1, n)$, \\[+2pt]
$y^{6}xy^{-6} $ & \hskip -5pt $=$ &  \hskip -4pt $(n-4, n-3)(n-2, n-1)$, \\[+2pt]
  & \hskip -5pt $:$ &  \hskip -4pt ${}$ \\[+2pt]
$y^{n-1}xy^{-(n-1)} $ & \hskip -5pt $=$ &  \hskip -4pt $(3,4)(5,6)$, \\[+2pt]
$y^{n}xy^{-n} $ & \hskip -5pt $=$ &  \hskip -4pt $(2,3)(4,5)$.  \\[+2pt]
\end{tabular}
\end{center}
${}$\\[-4pt]
\indent
It is easy to see that this set $S$ generates a transitive subgroup of $\Alt(n)$, and contains the $5$-cycle $xy^{n}xy^{-n} = (1,3,5,4,2)$.   This $5$-cycle can be used to prove that the subgroup generated by $S$ is primitive,  and hence equal to $\Alt(n)$. For if $V$ is a block of imprimitivity for the subgroup, containing the point $1$,  then $V$ is preserved by the $5$-cycle $xy^{n}xy^{-n}$, and hence contains  the points $2$, $3$, $4$ and $5$.  Then furthermore, $V$ must be preserved by the $(n-1)$-cycle  $y^{2}xy^{-1} = (1, 3, 4, 5, \dots, n-2, n-1, n)$, and so contains all of the points $1$ to $n$.  Hence $\langle S \rangle$ is primitive.  Finally by Jordan's Theorem~\cite[Theorem 3.3E]{DixonMortimer}, the presence of the $5$-cycle also implies that  $\langle S \rangle = \Alt(n)$.

As also $S$ is closed under taking inverses, it follows that $\varphi$ and hence every proper skew  morphism of $\Alt(n)$ gives rise to a (non-balanced) regular Cayley map.

\medskip
\textbf{Case 7:} $B=\Sym(n)$ for $n=5$ and $n \geq 7$
\medskip

This case, where the skew product group $G$ is $\Sym(n+1)$, and $B = \Sym(n)$ and $Y$ cyclic of order $n+1$, is similar to the previous case, with only one equivalence class of skew generating pairs and a single equivalence class of proper skew morphisms. Again the size of this equivalence class is $|\Aut(B)|=n!$.

Also if we take $y$ as the $(n+1)$-cycle $(1,2,3,\dots,n,n+1)$, and $\varphi$ as the skew morphism  of $\Sym(n)$ induced by $y$, then the set $T$ of elements of the cycle of $\varphi$ on $\Sym(n)$ containing  the single transposition $(1,2)$ consists of the $n$-cycle $(1, 2, 3, 4, \dots, n-2, n-2, n)$ and its inverse,  plus the $n-1$ transpositions $(1,2)$, $(2,3)$, $\dots$, $(n-2,n-1)$, $(n-1,n)$.   The first $n$-cycle and any one of these transpositions generate $\Sym(n)$, and clearly $T$ is closed  under inverses, so again it follows that $\varphi$ and hence every proper skew morphism of $\Sym(n)$  gives rise to a (non-balanced) regular Cayley map.

\medskip

\begin{remark}
\label{rem:skewregcaymap}
Note that all skew morphisms considered in Cases 1 to 7 above induce a cycle which forms a generating set and is closed under inverses. Hence by Proposition~\ref{prop:mapskews} they all give rise to a regular Cayley map.

\end{remark}

In all of the cases above the centraliser in $\Aut(B)$ of $\varphi$ is trivial. Consequently, this is true for all proper skew morphisms of simple groups. On the other hand, there are many examples of groups which admit a skew morphism with non-trivial centraliser.  The latter is true even for some almost simple groups (see Example~\ref{example:sym5}).




\subsection{Summary of skew morphisms of simple groups}\label{sec:skewsimple}
Let $G=BY$ be the skew product group corresponding to some skew morphism of a simple group $B$. If $B$ is not core-free in $G$, then it is normal in $G$ and, by  Lemma \ref{lem:kernel}, the skew morphism is an  automorphisms of $B$. If $B$ is core-free in $G$, then by Theorem~\ref{thm:main}, $B$ is one of   $\Alt(5)$, $\M_{22}$ or $\Alt(n)$ for even $n\geq 6$.  In particular, this proves Corollary~\ref{cor:main}.

Moreover, as we have shown in Section~\ref{sub:corefree},  $\Alt(5)$, $\Alt(6)$ and $\M_{22}$ admit $240$, $1440$ and $1774080$
proper skew morphisms, respectively while, for even $n \ge 8$, $\Alt(n)$ admits $n!$ proper skew morphisms.
Skew morphisms of $\Alt(5)$ and $\M_{22}$ fall into two equivalence classes of equal size,
while those of $\Alt(n)$ form a single equivalence class, for even $n \ge 6$.

By Remark \ref{rem:skewregcaymap} we also have the following.

\begin{theorem}
Every proper skew morphism of a non-abelian finite simple group $B$ gives rise to a non-balanced regular Cayley map for $B$.
\end{theorem}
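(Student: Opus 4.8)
The plan is to deduce the statement directly from the classification in Corollary~\ref{cor:main} together with the case-by-case analysis of Section~\ref{sub:corefree}. First I would observe that if $\varphi$ is a proper skew morphism of a non-abelian simple group $B$, then in the associated skew product group $G=B\langle\varphi\rangle$ the subgroup $B$ cannot be normal: otherwise, by Lemma~\ref{lem:kernel}, $\varphi$ would be an automorphism of $B$, contrary to properness. Hence $B$ is core-free in $G$, so Corollary~\ref{cor:main} applies and $B$ is isomorphic to $\Alt(5)$, $\M_{22}$, or $\Alt(n)$ for some even $n\ge 6$ (with $\Alt(6)$ handled separately because of its exceptional outer automorphisms).

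Next I would invoke the explicit work already carried out in Section~\ref{sub:corefree}. Cases~1, 3, 4 and~6 there establish, respectively for $\Alt(5)$, $\M_{22}$, $\Alt(6)$, and $\Alt(n)$ with even $n\ge 8$, that every proper skew morphism induces at least one cycle (as a permutation of $B$) whose underlying set is closed under taking inverses and generates $B$. For the three sporadic groups this is a finite verification relying on {\sc Magma}; for the infinite family $\Alt(n)$ it is the argument exhibiting the cycle of $\varphi$ through the double transposition $(1,2)(3,4)$, noting it contains the $5$-cycle $xy^{n}xy^{-n}=(1,3,5,4,2)$, deducing primitivity of $\langle S\rangle$, and then applying Jordan's Theorem to conclude $\langle S\rangle=\Alt(n)$. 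By Proposition~\ref{prop:mapskews}, the existence of such a cycle means precisely that $\varphi$ gives rise to a regular Cayley map for $B$. (This is exactly the content of Remark~\ref{rem:skewregcaymap}.)

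Finally I would recall the observation made at the end of Section~\ref{sub:determining skews}: every \emph{balanced} regular Cayley map arises from an automorphism of the group, so a proper skew morphism can only produce non-balanced ones. Combining this with the previous step yields the statement. The only substantive part of the argument is the uniform analysis of $\Alt(n)$ for even $n\ge 8$ in Case~6 of Section~\ref{sub:corefree} — writing down the elements of a single cycle of $\varphi$ explicitly and verifying generation — and that is the step I expect to be the main obstacle; everything else is bookkeeping plus the appeal to {\sc Magma} for the three sporadic cases.
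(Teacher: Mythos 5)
Your proposal is correct and follows essentially the same route as the paper: reduce via Lemma~\ref{lem:kernel} and Corollary~\ref{cor:main} to the core-free cases $\Alt(5)$, $\M_{22}$ and $\Alt(n)$ for even $n\ge 6$, then cite the cycle computations of Section~\ref{sub:corefree} (Remark~\ref{rem:skewregcaymap}) together with Proposition~\ref{prop:mapskews} and the fact that balanced maps come only from automorphisms. The only minor point worth making explicit is that for simple $B$ the core of $B$ in $G$ is either trivial or all of $B$, which is why ``not normal'' immediately gives ``core-free''.
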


Unfortunately we cannot say much about the regular Cayley maps that arise from automorphisms of non-abelian finite simple groups (or monolithic groups in general), because that requires  much greater knowledge about generating sets for such groups.

\subsection{Skew morphisms of monolithic groups: the non-core-free case}

We now consider the case where the monolithic group $B$ is not core-free in the skew product group $G=BY$. If $B$ is simple, then it is normal in $G$, and hence all skew morphisms will be automorphisms of $B$. In contrast, if $B$ is monolithic but not simple, it might not be normal in $G$ (see Example~\ref{example:sharp}) and hence the corresponding skew-morphism might be proper. Moreover, if $B$ is almost simple, the corresponding skew morphism always restricts to an automorphism of $\soc(B)$; see Corollary~\ref{restrictsTo}.

\begin{lemma}
\label{lem:uniquesocle}
If $B$ is an almost simple group with socle $A$, then $A$ is the only subgroup of $B$ isomorphic to $A$.
\end{lemma}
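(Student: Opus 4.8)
The plan is to use the fact that $A$ is the unique minimal normal subgroup of $B$ together with the fact that $A$ is non-abelian simple, and argue that any subgroup $A'$ of $B$ with $A' \cong A$ must coincide with $A$. First I would observe that since $B$ is almost simple with socle $A$, we have $A \le B \le \Aut(A)$ and $\C_B(A) = \{1\}$, because $\C_B(A) \cap A = \Z(A) = \{1\}$ and $\C_B(A)$ is normal in $B$, so if it were non-trivial it would contain the minimal normal subgroup $A$, which is absurd. Next, let $A'$ be any subgroup of $B$ isomorphic to $A$; since $A'$ is a non-abelian simple group, the subgroup $AA'$ of $B$ satisfies $A' \cap A \trianglelefteq A'$, so either $A' \cap A = A'$ (i.e. $A' \le A$, hence $A' = A$ by order, and we are done) or $A' \cap A = \{1\}$.

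In the latter case, $A'$ normalises $A$ (as $A \trianglelefteq B$), and conversely $A$ normalises $A'$: indeed $[A,A'] \le A \cap A' = \{1\}$ since $A \trianglelefteq AA' $ and $A' \trianglelefteq AA'$ would both need checking, so more carefully, $A'$ acts on $A$ by conjugation, giving a homomorphism $A' \to \Aut(A)$ whose kernel is $A' \cap \C_B(A) = \{1\}$ (using $\C_B(A) = \{1\}$); hence $A'$ embeds in $\Aut(A)$ via conjugation on $A$. But then the image $\overline{A'}$ is a non-abelian simple subgroup of $\Aut(A)$ meeting $\Inn(A) \cong A$ trivially (since $A' \cap A = \{1\}$ and this intersection maps onto $\overline{A'} \cap \Inn(A)$), so $[\,\overline{A'}, \Inn(A)\,] \le \overline{A'} \cap \Inn(A) = \{1\}$ because $\Inn(A) \trianglelefteq \Aut(A)$; thus $\overline{A'} \le \C_{\Aut(A)}(\Inn(A)) = \{1\}$, forcing $A' \le \C_B(A) = \{1\}$, a contradiction. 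Therefore $A' \cap A = \{1\}$ is impossible, and $A' = A$.

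The main obstacle is keeping the bookkeeping straight between $A'$ acting on $A$ and the image of that action inside $\Aut(A)$: one has to be careful that $\C_{\Aut(A)}(\Inn(A)) = \{1\}$ (which holds because $A$ is non-abelian simple, so $\Z(A) = \{1\}$ and the centraliser of $\Inn(A)$ in $\Aut(A)$ is trivial), and that the kernel of the conjugation action of $A'$ on $A$ is exactly $A' \cap \C_B(A)$. Once those two observations are pinned down, the argument is short. An alternative, slicker route avoids $\Aut(A)$ entirely: if $A' \cap A = \{1\}$ then $[A, A'] \le A \cap A' = \{1\}$ (since $A \trianglelefteq B$ gives $[A,A'] \le A$, and $A' \trianglelefteq A A'$ gives $[A,A'] \le A'$ once one checks $A'$ is normalised by $A$, which follows from $[A,A'] \le A'$ being what we want — so this needs the $\Aut(A)$ detour after all, or instead one notes $A'$ centralises $A$ directly because $A' \le B \le \Aut(A)$ acts on $A$ and $A' \cap \Inn(A) = \{1\}$ forces trivial action), landing again at $A' \le \C_B(A) = \{1\}$. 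Either way the crux is the triviality of $\C_B(A)$, which is immediate from monolithicity.
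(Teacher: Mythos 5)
Your reduction to the case $A' \cap A = \{1\}$ is fine, and so is the observation that $\C_B(A) = \{1\}$, but the key step that closes your argument is invalid. From $\Inn(A) \trianglelefteq \Aut(A)$ you may only conclude $[\,\overline{A'},\Inn(A)\,] \le \Inn(A)$; to get $[\,\overline{A'},\Inn(A)\,] \le \overline{A'}$ (and hence containment in the intersection) you would need $\Inn(A)$ to normalise $\overline{A'}$, which is exactly what is not known — it is the same circularity you flagged in your ``alternative route'' paragraph, just transplanted into $\Aut(A)$. In general a subgroup that meets a normal subgroup trivially need not centralise it: in $\Sym(3)$ the subgroup $\langle (1\,2)\rangle$ meets $\Alt(3)$ trivially but does not centralise it, and in $\Aut(A)$ itself any subgroup generated by an outer automorphism meets $\Inn(A)$ trivially while acting non-trivially on $A$. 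For the same reason, your parenthetical claim that ``$A' \cap \Inn(A) = \{1\}$ forces trivial action'' is false: trivial intersection with $\Inn(A)$ only says the induced map $A' \to \Out(A)$ is faithful, not that the conjugation action on $A$ is trivial. So the contradiction you derive does not follow, and the case $A' \cap A = \{1\}$ remains open in your write-up.

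This gap cannot be patched by centraliser considerations alone, because the lemma genuinely rests on a deeper input. The paper's proof handles the case $T \cap A = \{1\}$ by noting that then $T \cong TA/A \le B/A \le \Out(A)$, and $\Out(A)$ is soluble by the Schreier Conjecture (a consequence of the CFSG), whereas $T \cong A$ is insoluble — contradiction. In other words, what is really needed is that $\Out(A)$ contains no copy of the non-abelian simple group $A$, and the only known route to this is Schreier/CFSG; the elementary fact $\C_{\Aut(A)}(\Inn(A)) = \{1\}$ is not strong enough to substitute for it. To repair your proof, replace the commutator argument by the embedding of $A'$ into $B/A \le \Out(A)$ and invoke solubility of $\Out(A)$.
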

\begin{proof}
Let $T$ be a subgroup of $B$ isomorphic to $A$.  Then $A \cap T$ is normal in $T$ since $A$ is normal in $B$,  but $T$ is simple, so either $A \cap T = T$ or $A \cap T =\{1\}$.  In the latter case, $T$ is isomorphic to a subgroup of $B/A$, which in turn is a subgroup of $\Out(A)$  since $A$ is the socle of the almost simple group $B$, but that is impossible since $T$ is insoluble while $\Out(A)$ is soluble, by the proof of Schreier's Conjecture (see~\cite[Section~4.7]{DixonMortimer}). Thus $A \cap T = T$, and so $T = A$.
\end{proof}

\begin{corollary}\label{restrictsTo}
Let $B$ be an almost simple group with socle $A$, and let $\varphi$ be a skew morphism of $B$ associated with the skew product group $G=BY$. If $B$ is not core-free in $G$, then  $\varphi$ restricts to an automorphism of $A$.
\end{corollary}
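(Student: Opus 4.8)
The plan is to show that the monolith $A$ lies inside $\ker\varphi$, and then use the facts that $\varphi$ maps $\ker\varphi$ isomorphically onto a subgroup of $B$ and that $A$ is the unique subgroup of $B$ isomorphic to $A$ (Lemma~\ref{lem:uniquesocle}) to deduce $\varphi(A)=A$. Since $\varphi$ acts as a group homomorphism on $\ker\varphi$, it then restricts to an automorphism of $A$.

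Carrying this out, I would first observe that an almost simple group $B$ with socle $A$ is in particular monolithic with monolith $A$: any minimal normal subgroup of $B$ distinct from $A$ would meet $A$ trivially and hence centralise it, contradicting $\C_B(A)=\{1\}$ (which holds because $A \le B \le \Aut(A)$ and $A$ has trivial centre, so $\Inn A$ has trivial centraliser in $\Aut(A)$). Thus Corollary~\ref{soceqsoccor} applies, and since $B$ is not core-free in $G$ we conclude that $A$ is normal in $G$; in particular $A$ is normalised by $Y$. By Lemma~\ref{lem:kernel}, $\ker\varphi$ is the largest subgroup of $B$ normalised by $Y$, and therefore $A \le \ker\varphi$. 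Now for any $a \in \ker\varphi$ we have $\pi(a)=1$, so the defining identity of a skew morphism yields $\varphi(ab)=\varphi(a)\varphi(b)$ for all $a \in \ker\varphi$ and $b \in B$; hence $\varphi$ restricts to an injective homomorphism on $\ker\varphi$, and a fortiori on $A$. Consequently $\varphi(A)$ is a subgroup of $B$ isomorphic to $A$, so Lemma~\ref{lem:uniquesocle} forces $\varphi(A)=A$, and thus $\varphi|_A$ is an automorphism of $A$.

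I do not anticipate any genuine obstacle here: the argument is essentially a matter of assembling Corollary~\ref{soceqsoccor}, Lemma~\ref{lem:kernel}, and Lemma~\ref{lem:uniquesocle} in the right order. The only two points that deserve an explicit word are that an almost simple group is monolithic (so that Corollary~\ref{soceqsoccor} is applicable) and that a skew morphism really does act as a group homomorphism on its kernel, the latter being immediate once one notes that the power function equals $1$ on $\ker\varphi$.
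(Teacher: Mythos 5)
Your argument is correct and follows essentially the same route as the paper's proof: deduce that $A$ is normal in $G$ from the non-core-free hypothesis (you invoke Corollary~\ref{soceqsoccor} directly, the paper cites Proposition~\ref{prop:notcorefree}, which rests on the same corollary), conclude $A\le\ker\varphi$ via Lemma~\ref{lem:kernel}, and use Lemma~\ref{lem:uniquesocle} to force $\varphi(A)=A$. Your extra remarks (that an almost simple group is monolithic, and that $\varphi$ is a homomorphism on its kernel) are correct and merely make explicit facts the paper records in its background section.
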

\begin{proof}
By Proposition~\ref{prop:notcorefree}, $A$ is normal in $G$,  and so $\ker\varphi$ contains $A$ by Lemma~\ref{lem:kernel}.   It follows that $\varphi$ restricts to an isomorphism from $A$ to the subgroup $T = \varphi(A)$ of $B$,  but then $A=T$ by Lemma \ref{lem:uniquesocle}, and therefore $\varphi$ restricts to an automorphism of $A$.  
\end{proof}

Note here that if $B$ is almost simple with socle $A$, then $A$ must be a relatively large subgroup of $B$, because $|B : A| \leq |\Out(A)| < \log_2 |A|$; see~\cite{Kohl} for example.

\medskip

It would be interesting to obtain a generalisation of Lemma \ref{lem:uniquesocle} to the monolithic case, with a corresponding generalisation of Corollary~\ref{restrictsTo}.

\subsection{Skew morphisms of monolithic groups with socle of index two}\label{sec:newnew}
We now show how  Proposition~\ref{prop:notcorefree} can be used to find all skew product groups $G=BY$ for a monolithic group $B$ with monolith $A$ in  the case when $B \cong \Aut(A)$ and is not core-free in $G$, and $|B : A| = 2$.

\begin{proposition}\label{prop:index2}
Let $B$ be a monolithic group with monolith $A$, and let $G=BY$ be a skew product group for $B$ with $B$  not core-free in $G$. If $B$ is isomorphic to $\Aut(A)$ and $|B : A| =2$, then $G = \C_G(A) \rtimes B$. Moreover, $|\C_G(A)|=|Y|$  and is bounded above by the maximum order of an element in $B$, and $\C_G(A)$ has a cyclic subgroup of index at most two.
\end{proposition}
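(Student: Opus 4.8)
The plan is to extract everything from Proposition~\ref{prop:notcorefree} together with the fact that a monolith is centreless, by identifying $\C_G(A)$ as a normal complement to $B$ in $G$ via the conjugation action on $A$. First I would note that since $|B:A|=2$ we cannot have $B=A$, so Proposition~\ref{prop:notcorefree} applies in its second alternative: $A$ is normal in $G$, and $|\C_Y(A)| < |B/A| = 2$, hence $\C_Y(A)=\{1\}$. This triviality of $\C_Y(A)$ is the real leverage; everything else is bookkeeping.

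Next I would set $C=\C_G(A)$, which is normal in $G$ because $A$ is. As the monolith $A$ is a non-trivial direct product of non-abelian simple groups, it is centreless, so $A\cap C=\Z(A)=\{1\}$; likewise $\C_B(A)$ is a normal subgroup of $B$ meeting $A$ trivially, and since $A$ is the unique minimal normal subgroup of $B$ this forces $\C_B(A)=\{1\}$, so $B\cap C=\{1\}$ and the conjugation action embeds $B$ into $\Aut(A)$. Because $|B|=|\Aut(A)|$ by hypothesis, this embedding is onto, so the conjugation homomorphism $G\to\Aut(A)$ is surjective with kernel $C$, giving $G/C\cong\Aut(A)\cong B$. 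Combining $|G|=|B||Y|$ (from $B\cap Y=\{1\}$) with $|G/C|=|B|$ yields $|C|=|Y|$, and combining $B\cap C=\{1\}$ with $|B||C|=|G|$ gives $G=BC$; since $C$ is normal this is exactly $G=\C_G(A)\rtimes B$. For the element-order bound, I would observe that $Y\cap C=\C_Y(A)=\{1\}$, so the quotient map $G\to G/C\cong B$ restricts to an embedding of the cyclic group $Y$ into $B$; hence $B$ has an element of order $|Y|$, and $|\C_G(A)|=|C|=|Y|$ is at most the largest element order in $B$.

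For the last assertion I would put $N=AC$. From $G=BC$ and $N=AC$ one gets $N=A\times C$ (internal direct product, as $A\cap C=\{1\}$) and $|G:N|=|B:A|=2$. Then $Y_0:=Y\cap N$ has index at most $2$ in the cyclic group $Y$, so $Y_0$ is cyclic of order at least $|Y|/2=|C|/2$. The projection $A\times C\to C$ restricted to $Y_0$ has kernel $Y_0\cap A\le Y\cap B=\{1\}$, so it embeds $Y_0$ into $C$; its image is a cyclic subgroup of $C$ of order at least $|C|/2$, that is, of index at most $2$ in $C$.

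I expect the only mildly delicate step to be this final paragraph: one has to notice that under the hypotheses $N=AC$ splits as the internal direct product $A\times C$, and then use the projection onto $C$ to transport the "cyclic subgroup of index at most two'' from $Y$ into $C$. The earlier parts are routine once $\C_Y(A)=\{1\}$ and $\C_B(A)=\{1\}$ are in hand, both of which follow immediately from Proposition~\ref{prop:notcorefree} and the centreless-ness of the monolith.
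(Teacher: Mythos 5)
Your proposal is correct and follows essentially the same route as the paper: deduce $\C_Y(A)=\{1\}$ from Proposition~\ref{prop:notcorefree} since $|B:A|=2$, use $\C_B(A)=\{1\}$ and $B\cong\Aut(A)$ to get $G/\C_G(A)\cong\Aut(A)$ and hence $G=\C_G(A)\rtimes B$ with $|\C_G(A)|=|Y|$, and embed $Y$ into $B$ for the element-order bound. The only (cosmetic) difference is the last step, where you project $Y\cap AC$ from the internal direct product $A\times\C_G(A)$ onto $\C_G(A)$, while the paper passes to $G/A$, where $\overline{Y}$ has index two and $\overline{\C_G(A)}\cong\C_G(A)$; both are the same index-two bookkeeping and both are valid.
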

\begin{proof}
By Proposition~\ref{prop:notcorefree}, $A$ is normal in $G$ and $\C_Y(A)=1$. Since $B \cong \Aut(A)$, we have  $G/\C_G(A) \cong \Aut(A)$, and then since $B$ is monolithic, $\C_B(A)=1$ and so $G  = \C_G(A) \rtimes B$. In particular, by order considerations, we find that $|\C_G(A)| = |Y|$.  But also $Y$ acts faithfully by conjugation on $A$ since $\C_Y(A)=1$, and thus $Y$ embeds in $\Aut(A)\cong B$. As $Y$ is cyclic, it follows that $|\C_G(A)| = |Y|$ is bounded above by the maximum order of an element in $B$.

Next, write $\overline{X}$ for $XA/A$ when $X$ is a subgroup of $G$. Since $A\cap Y$ and $|B : A| = 2$, we see that $\overline{Y}\cong Y$ and $|\overline{B}|=2$. It follows that $|\overline{G}| = 2|\overline{Y}|$, and hence $\overline{Y}$ is a cyclic subgroup of index two in $\overline{G}$. This implies that  $\overline{\C_G(A)}\cap \overline{Y}$ is a cyclic subgroup of $\overline{\C_G(A)}$ of index at most $2$. Finally, since $A$ is the monolith of $B$, we find that $A \cap \C_G(A)=1$, and thus $\overline{\C_G(A)} \cong \C_G(A)$. 
\end{proof}

There are many interesting examples of a monolithic group $B$ with monolith $A$ such that $B \cong \Aut(A)$ and $|B : A| =2$, such as the following:
\begin{itemize}
\item $\Sym(n)$ for $n=5$ and $n\geq 7$,
\item $\PGL(2,p)$ for an odd prime $p$,
\item $\PGL(n,p)$ for $n\geq 3$ and a prime $p$ such that $\gcd(n,p-1) = 1$,
\item $T\wr \C_2$, where $T$ is a non-abelian simple group such that $T \cong \Aut(T)$.
\end{itemize}

\smallskip
We now explain how to find all skew product groups $G$ for such a monolithic group $B$. By the results of Section~\ref{sub:corefree}, it suffices to consider the case where $B$ is not core-free in $G$, and apply Proposition~\ref{prop:index2}. This gives us a good upper bound on the order of $\C_G(A)$. Also the groups having a cyclic subgroup of index at most two are very well-understood (see \cite{CzisterDomokos} for example),  and hence we can easily identify all possible candidates for $\C_G(A)$. For each candidate for $\C_G(A)$, we  find all semi-direct products of the form $\C_G(A) \rtimes B$. As $A$ acts trivially on $\C_G(A)$ by conjugation and $|B : A| = 2$, conjugation by $B$ induces a  group of automorphisms of $\C_G(A)$ of order at most two. 
This gives us a complete list of candidates for $G$. As a final step, we reject a candidate $G$ if it does not admit a cyclic core-free complement of $B$.

Once we have all possible skew product groups for $B$, we can attempt to find all skew morphisms using the method outlined in Section~\ref{sub:determining skews}. We now illustrate this method in the smallest relevant case, namely $B=\Sym(5)$.

\begin{example}\label{example:sym5}
Let $B=\Sym(5)$, and $A=\soc(B)=\Alt(5)$, and let $G=BY$ be a skew product group corresponding to some skew morphism of $B$. If $B$ is normal in $G$, then by Lemma~\ref{lem:kernel}, the skew morphism is an automorphism of $B$.  Also the case when $B$ is core-free in $G$ was dealt with in Section~\ref{sub:corefree}, 
and hence we may assume that the core of $B$ in $G$ is $A$. 

By Proposition~\ref{prop:index2}, we find that $G = \C_G(A) \rtimes B$, and that $|\C_G(A)|$ is bounded above by the maximum order of an element in $B$, namely $6$. Then since $G = \C_G(A) \rtimes B$ where $B$ is not normal in $G$, it follows that $|\C_G(A)|\geq 3$. In particular, $\C_G(A)$ is isomorphic to $\C_3$, $\C_4$, $\C_2^{\, 2}$, $\C_5$, $\C_6$ or $\Sym(3)$. 

It remains to determine the conjugation action of $B$ on $\C_G(A)$.  Since $B$ is not normal in $G$, this action is non-trivial, but its index $2$ subgroup $A$ acts trivially by definition, and so $B$ induces an automorphism of $\C_G(A)$ of order $2$. In each case, there is a unique conjugacy class of element of order $2$ in $\Aut(\C_G(A))$, and hence a unique possibility for $G$. 

Now that we know $G$, it remains to find $Y$, which must be a cyclic core-free complement for $B$ in $G$. It turns out that $G$ has no such subgroup when  $\C_G(A) \cong \C_4$ or $\C_G(A) \cong \C_6$, and a unique class of such subgroups in all other cases, up to conjugacy under $\N_G(B)$. (This is a slightly tedious computation to do by hand, but it is easy to do by computer.)

Using similar methods as in Section~\ref{sub:corefree}, we enumerate all skew morphisms for these skew product groups. When $\C_G(A)$ is isomorphic to $\C_3$,  $\C_2^{\, 2}$, $\C_5$ or $\Sym(3)$,  we get a single equivalence class  of skew morphisms in each case, containing $20$, $30$, $24$ and $20$ proper skew morphisms, respectively. Indeed in all of these cases, the centraliser in $\Aut(B)$ of the skew morphism is non-trivial. 

Thus $\Sym(5)$ admits $20+30+24+20 = 94$ proper skew morphisms (in four equivalence classes) when $\Alt(5)$ is the core of $\Sym(5)$ in its skew product group, plus another $120$ proper skew morphisms in a single equivalence class when the core is trivial, and $120$ automorphisms when the core is $\Sym(5)$.
\end{example}

Using a computer to automate the approach used in Example~\ref{example:sym5}, we are able to find all skew morphisms for all monolithic groups $B$ with monolith $A$ such that $B \cong \Aut(A)$, $|B : A| = 2$, and $|B| < 200000$.  This includes the cases where $B = \Sym(8)$, $\Aut(\M_{12})$ or $\PGL(2,53)$, for example. By comparison, the best method we know for determining all skew morphisms of an arbitrary finite group $A$ is computationally feasible only for $|A|$ up to $47$ (by considering all transitive permutation groups of degree $|A|$ with cyclic point-stabiliser).

Finally, we note that the methods of this section, based on Proposition~\ref{prop:notcorefree}, can be generalised to other monolithic groups, but the difficulty increases quickly with respect to $|\Aut(A):A|$.

\medskip 
\medskip
\begin{center}
{\sc Acknowledgements}
\end{center}

The authors acknowledge the use of {\sc Magma} to investigate possibilities and check examples
of groups and skew morphisms relevant to this paper.  Also the second author is grateful to the N.Z.\ Marsden Fund
(via the project UOA1626) for its support. \smallskip


\end{document}